\newtheorem{theorem}{Theorem}[section]
\newtheorem{proposition}{Proposition}[section]
\newtheorem{remark}{Remark}
\newtheorem{assumption}{Assumption}
\newcommand{\mI}{{\mathcal I}}
\newcommand{\mJ}{{\mathcal J}}
\newcommand{\mK}{{\mathcal K}}
\newcommand{\mM}{{\mathcal M}}
\newcommand{\bN}{{\mathbb N}}
\newcommand{\bR}{{\mathbb R}}
\newcommand{\bZ}{{\mathbb Z}}
\newcommand{\bP}{{\mathbb P}}
\newcommand{\bE}{{\mathbb E}}
\newcommand{\bI}{{\mathbb I}}
\newcommand{\bm}{\bar{m}}
\newcommand{\bM}{\bar{M}}
\newcommand{\argmax}{\operatornamewithlimits{argmax}} 
\title{Utility Optimization in Congested Queueing Networks}
\author{N. S. Walton}
\begin{document}
\maketitle

\begin{abstract}
We consider a multi-class single server queueing network as a model of a packet switching network. The rates packets are sent into this network are controlled by queues which act as congestion windows. By considering a sequence of congestion controls, we analyse a sequence of stationary queueing networks. In this asymptotic regime, the service capacity of the network remains constant and the sequence of congestion controllers act to exploit the network's capcity by increasing the number of packets within the network. We show the stationary throughput of routes on this sequence of networks converges to an allocation that maximizes aggregate utility subject to the network's capacity constraints. To perform this analysis, we require that our utility functions satisfy an exponential concavity condition. This family of utilities includes weighted $\alpha$-fair utilities for $\alpha >1$.
\end{abstract}

\section{Introduction}
We are interested in proving how end-to-end control can provide a mechanism where routes receive a transfer rate that is the solution to a network wide utility optimization problem. Using differential equations to model network dynamics, authors have demonstrated network utility optimization, see Kelly, Maulloo, Tan \cite{KMT98}. In this paper, we consider a sequence of stationary queueing networks and demonstrate network utility optimization. We note here that an accessible heuristic derivation of the papers main results can be found in Section \ref{heuristic}.


As a method of allocating resources and introducing fairness, work of Kelly \cite{Ke97} considered utility optimization of the form
\begin{align}
 &\text{maximize} && \sum_{i\in\mI} U_i(\Lambda_i)&\label{system1}\\
&\text{subject to} && \sum_{i: j\in i} \Lambda_i \leq C_j, &j\in\mJ,\label{system2}\\
&\text{over} && \Lambda_i\geq 0, & i\in\mI\label{system3},
\end{align}
where $U_i$ is an increasing strictly concave utility function with derivative satisfying $U'_i(\Lambda_i)\rightarrow \infty$ as $\Lambda_i\rightarrow 0$. We call this optimization problem the \textit{system problem}. In words, it states that one should maximize the aggregate utility of the transfer rate received by users of a network's routes \eqref{system1} subject to the network's capacity constraints \eqref{system2}. But, the utility preferences of users are separate and unknown to the functional operation of a communication network. Similarly, users do not explicitly know the network's topology and exact capacity. So, the network's behaviour and preferences of users must be decomposed.

In \cite{Ke97}, Kelly also introduced \textit{proportional fairness} as the unique solution to the optimization problem
\begin{align}
 &\text{maximize} && \sum_{i\in\mI} \bar{m}_i\log \Lambda_i&\label{network problem1}\\
&\text{subject to} && \sum_{i: j\in i} \Lambda_i \leq C_j, &j\in\mJ,\\
&\text{over} && \Lambda_i\geq 0, & i\in\mI.\label{network problem3}
\end{align}
We call this optimization problem the \textit{network problem} or the \textit{proportionally fair optimization problem}. Kelly \cite{Ke97} considered the combined solution of the network problem and the following \textit{user problems}, for each $i\in\mI$
\begin{align}
&\text{maximize}&& U_i\big(\frac{\bar{m}_i}{q_i}\big)-\bar{m}_i&\label{user problem}\\
&\text{over}&& \bar{m}_i\geq 0.\notag&
\end{align}
This combined solution was considered under the relation
\begin{equation}
 \bm_i=\Lambda_iq_i,\qquad i\in\mI, \label{little}
\end{equation}
where $q_i=\sum_{j\in i} q_j$ and $(q_j:j\in\mJ)$ are the Lagrange multipliers associated with the network problem. Theorem 2 of Kelly \cite{Ke97} found under (\ref{little}) that the combined solution of the network and user problems gave the solution to the system problem. 

This result was constructed to suggest an end-to-end argument for providing optimization and fairness across a communication network. The result provided a method for decomposing the system problem into a user problem that is independent of the network structure except through parameter $q_i$ and a network problem that is independent of user's preferences, except through parameters $\bm=(\bm_i:\,i\in\mI)$. Interpreted in the context of a communication network this separated the preferences of users performing end-to-end communication and the network's preferred optimal behaviour. In \cite{Ke97} the solution is interpreted as setting prices $(q_j:j\in\mJ)$ for sending traffic through the network. With these prices each user, $i\in\mI$, chooses an amount of money, $\bm_i$, it is willing to pay per unit of time for network resources. From this, the user receives an amount of bandwidth $\Lambda_i=\frac{\bm_i}{q_i}$.

By construction, this result considers a static model and the end-to-end argument performed by users is implicit. Subsequent work has successfully used differential equations to add dynamics to this notion of optimization and decomposition \cite{KMT98, JoTa01, Ke03, Sr04}. Other work has considered the form of utility optimization achieved by different protocols \cite{VBB00, MoWa00, KuSr03}. Also, authors have also considered stochastic models of flow across a network \cite{MaRo98, BoMa01, BoPr04}. More recently, authors have explicitly used queue length as a mechanism to provide utility based fairness \cite{St05,ErSr07}. We now describe the approach taken in this paper.

In 1979, Schweitzer \cite{Sc79} studied approximations of closed multi-class queueing networks and considered how asymptotic conditions on such networks might satisfy the Kuhn-Tucker conditions for proportionally fair optimization. In 1989, Kelly \cite{Ke89} studied approximations of closed queueing networks and by an analogous analysis considered a similar optimisation formulation. In 1999, Massouli\'e and Roberts \cite{MaRo99} studied a fluid type queueing model and used these same Kuhn-Tucker conditions to deduce proportional fairness. Using large deviations and also heavy traffic analysis, recent work of Walton \cite{Wa09} and Kelly, Massouli\'e and Walton \cite{KMW09} have provided rigorous formalisations of the relationship between closed queueing networks and proportional fairness. The large deviations connection between multi-class queueing networks and proportional fairness gives a much more literal meaning to the network problem (\ref{network problem1}-\ref{network problem3}).

In light of this work, the first key observation of this paper is that we can use the asymptotic behaviour of a multi-class queueing network to express the network problem (\ref{network problem1}-\ref{network problem3}). Given this, $(\bar{m}_i:i\in\mI)$ must represent the number of packets on each route of this network. We let $(\bar{m}_i:i\in\mI)$ be recorded and controlled by congestion windows. In this paper, for each route a congestion window sends packets along its route at a rate which is a function of the number of packets on that route. We call this system of queues and congestion windows, a \textit{queueing system}. 

We wish to associate the congestion windows in a queueing system with the user problem (\ref{user problem}). The second key observations of this paper is that the user problem (\ref{user problem}) is reminiscent of a Legendre-Fenchel transform. Results like the G\"artner-Ellis Theorem \cite{DeZe98} relate the large deviations behaviour of sequences of random variables to the Legendre-Fenchel transform of their log moment generating function. So by controlling the number of packets in transfer in a network, under a large deviations asymptotic, this Legendre-Fenchel transform observation can be used to associate a utility function with a sequence of congestion windows. We interpret each congestion window in this sequence as a congestion controller's response to the level of congestion within the queueing system. We discuss this point further in Section \ref{heuristic} and more formally in Section \ref{cwnd}.

A third observation is that in our queueing system the statement (\ref{little}) corresponds to the statement of Little's law, i.e. the expected number of packets across routes equals the expected sojourn time of packets through the multi-class queueing network multiplied by the rate packets are sent into the multi-class queueing network. Thus these three observations now place the work of \cite{Ke97} in the context of a network of queues with congestion windowing.

The above three observations prescribe the limiting regime and theoretical approach of this paper. We consider a sequence of stationary queueing systems. The capacity of queues within this network are constant, but the control policy used by each congestion controller is altered in this sequence. Controller's sequentially increase the number of packets in flight within the network. Large deviations results are then be employed to show that the corresponding sequence of stationary distributions asymptotically concentrate probability on a point that maximizes network utility.

We now locate the main results of this paper. The queues and congestion windows considered in our queueing system are quasi-reversible. Theorem \ref{quasi system} describes standard reversibility results can be applied to calculate these queueing systems' stationary distribution. We consider a sequence of stationary queueing systems associated with a particular sequence of congestion windows. We study the large deviations of the stationary distribution of this sequence of queueing systems. In this large deviations limit our above three observations are realized and thus these queueing systems are asymptotically able to execute the analysis of Kelly \cite[Theorem 2]{Ke97}.  We find in our analysis that we require each utility function to satisfy an \textit{exponential concavity condition}, that the map $\lambda\mapsto U_i(e^\lambda)$ is concave. Theorem \ref{primaldual theorem} states this sequence of queueing systems obey a large deviations principle with rate function given by the system problem.
\begin{equation*}
 \max_{\Lambda\in\bR_+^I}\quad \sum_{i\in\mI} U_i(\Lambda_i) \quad\text{ subject to }\quad\sum_{i:j\in i} \Lambda_i \leq C_j,\quad j\in\mJ.
\end{equation*}
From this in Theorem \ref{final thrm}, we prove that the stationary rate packets are transferred through these queueing systems converges to $(\Lambda^*_i:i\in\mI)$ the solution to the system problem (\ref{system1}-\ref{system3}).



\subsection{Organization}
In Section \ref{heuristic}, we present a heuristic derivation of the papers main results. This section should be quick and accessible to most readers.

In Section \ref{cwnd}, we define our model of a congestion window. We study its stationary distribution when operating in isolation of a network. We study the large deviations behaviour of a sequence of congestion windows and we consider how to associate a utility function with this sequence. 

In Section \ref{multi-class queue}, we consider a well known model of a multi-class queue. We study these queues' stationary distribution when operating in isolation of a network and we study their large deviations behaviour. 

In Section \ref{queueing system}, we connect together the congestion windows of Section \ref{cwnd} and the queues of Section \ref{multi-class queue} to form a \textit{queueing system}. Similarly we study the stationary distribution of this queueing system and we study the large deviations of a sequence of queueing systems. In addition, we study dual relationship between the state and the flow through the queueing system and we prove the stationary throughput of packets in the queueing system converges to the solution to the system problem.

\subsection{Notation}
We let finite set $\mJ$ index the set of queues in a network. Let $J=|\mJ|$. A route through the network is a non-empty subset of queues. Let $\mI$ be the set of routes and let $I=\mI$. For each route $i=\{j^i_1,...,j^i_{k_i} \}$ we associate an order $(j^i_1,...,j^i_{k_i})$. This is the order that route $i$ packets with traverse their route. Also, we define the set of queue-route incidences, $\mK=\{(j,i)\in\mJ \times \mI: j\in i\}$ and let $K=|\mK|$.

Our multi-class queueing system will process packets through a network of queues and congestion windows. For each route there is a congestion window. Let \textit{window size} $\bm_i$ be the number of packets in congestion window $i\in\mI$. The window size is the number of sent but not yet acknowledged packets on route $i$. Each queue $j$ processes packets form routes $i\in\mI$ with $j\in i$. Let $m_{ji}$ be the number of route $i$ packets at queue $j$. We also define,
\begin{equation}
 m_j=\sum_{i:j\in i} m_{ji},\qquad j\in\mJ,\label{mj}
\end{equation}
as the number of packets at queue $j$. As each congestion window records the number of packets in transfer in the queueing network,
\begin{equation}
 \bm_i=\sum_{j\in i} m_{ji},\qquad i\in\mI.\label{mbar}
\end{equation}
Although we will often use $\bm=(\bm_i:i\in\mI)\in\bZ_+^I$, $m=(m_{ji}:(j,i)\in\mK)\in \bZ_+^K$ to refer to the number of packets at congestion windows and queues, in Sections (\ref{LDP cwnd}), (\ref{LDP queue}) and (\ref{LDP system}), when applying large deviations results, we will use $\bm\in\bR_+^I$ and $m\in\bR_+^K$ to refer to the proportion of packets in the queueing system at different congestion windows and queues.

In addition for $m\in\bZ_+^K$ we define,
\begin{equation*}
 \left( \begin{array}{cc} m_j \\ m_{ji}: i\ni j \end{array} \right)=\frac{m_j!}{\prod_{i: j\in i} (m_{ji}!)}.
\end{equation*}
For vectors $x\in\bR_+^D$ and $\phi\in\bR_+^D$ we define $||x||=(\sum_{d=1}^D x_d^2 )^{1/2}$, the Euclidean norm of $x$; we define $\lfloor x \rfloor=(\lfloor x_d \rfloor: d=1,...,D)$, the low integer part of each component of $x$ and we define $\phi\cdot x= \sum_{d=1}^D \phi_d x_d$, the dot product of $\phi$ and $x$.

\section{A Heuristic Derivation of Results}\label{heuristic}
In this section, we give a heuristic derivation of the results in this paper. The heuristic is an adaption of the arguments of Schweitzer \cite{Sc79}, Kelly \cite{Ke89}, and Roberts and Massouli\'e \cite{MaRo99} applied to a modified version of Kelly \cite{Ke97}. The formal proof, given in subsequent sections, follows a similar approach to Walton \cite{Wa09}.

The KKT conditions for the system problem (\ref{system1}-\ref{system3}) are that there exists positive multipliers $(q_j:j\in\mJ)$ and positive rates $(\Lambda_i:i\in\mI)$ such that
\begin{align*}
 U'_i(\Lambda_i)=\sum_{j\in i} q_j,\quad i\in\mI,\\
q_j\bigg(C_j-\sum_{i: j\in i} \Lambda_i\bigg)=0,\quad j\in\mJ,\\
\sum_{i: j\in i} \Lambda_i \leq C_j,\quad j\in\mJ.
\end{align*}

Consider a queueing system with routes $\mI$ and queues $\mJ$, as notated in the previous section. For each route $i$, packets are injected into the network; traverse the queues in their route in order $j^i_1,...,j^i_{k_i}$, and then leave. Let $q_j$ be the stationary sojourn time of a packet at queue $j$; let ${m}_{ji}$ be the stationary number of route $i$ packets in transfer at queue $j$ and let $\Lambda_i$ be the stationary sending rate of route $i$ packets into the network. By Little's law
\begin{equation*}
\Lambda_iq_j={m}_{ji}, \quad \forall i\in \mI\text{ and } j\in i.
\end{equation*}
Summing over the queues on route $i$ and rearranging gives that
\begin{equation}\label{KT-1}
\frac{\bar{m}_i}{\Lambda_i}=\sum_{j\in i} q_j,\quad \forall i\in\mI,
\end{equation}
where $\bar{m}_i$ is the stationary number of packets in transfer on route $i$. Suppose that a congestion window for each route $i$ injects packets into the network at a rate that is a function of $\bar{m}_i$, i.e. $g_i(\bar{m_i})=\Lambda_i$. If we chose $g_i(\cdot)$ so that
\begin{equation}\label{KT0}
\bar{m}_i=g^{-1}_i(\Lambda_i)=\Lambda_iU_i(\Lambda_i)
\end{equation}
then, \eqref{KT-1} implies
\begin{equation}\label{KT1}
 U'_i(\Lambda_i)= \sum_{j\in i} q_j,\qquad i\in\mI.
\end{equation}
Assuming all queues are stable, we know
\begin{equation}\label{KT2}
\sum_{i: j\in i} \Lambda_i \leq C_j,\quad \forall j\in\mJ.
\end{equation}
One can imagine, if equality (\ref{KT2}) is strict then sojourn times will be small, $q_j\approx 0$. Thus, approximately,
\begin{equation}\label{KT3}
q_j\Big( C_j - \sum_{i: j\in i} \Lambda_i \Big) = 0 ,\quad \forall j\in\mJ.
\end{equation}
Also
\begin{equation}\label{KT4}
q_j\geq 0,\quad \forall j\in\mJ \quad\text{and}\quad \Lambda_i\geq 0, \quad \forall i\in\mI.
\end{equation}
Interpreting $(q_j:j\in\mJ)$ as Lagrange multipliers, (\ref{KT1})-(\ref{KT4}) are precisely the above KKT conditions for the system problem. So the rates $(\Lambda_i:i\in\mI)$ and sojourn times $(q_j:j\in\mJ)$, implicitly defined by the queueing network, solve the system problem (\ref{system1}-\ref{system3}).

The remainder of this paper is concerned with making the above arguments rigorous. To make conditions like \eqref{KT3} strict, we will require the network to have a high level of congestion. We achieve this by considering a sequence of congestion controls $g_i^{(c)}$, whilst keeping $C_j$, each queues service capacity, fixed. As $c$ increases, the controllers increase the number of packets in transfer and thus more aggressively exploit network capacity. We think of this increase in congestion in an analogous way to the increase rules employed by the Transmission Control Protocol in Internet communications. We will apply large deviations techniques to a sequence of stationary network models, indexed by $c$, and prove that probability concentrates on the system optimal operating point.

\subsection{Choice of $g_i(\bar{m}_i)$ and $G_i(\bar{m}_i)$}\label{choose g}
Later, we will apply large deviations results to certain stationary distributions. For this reason, it will be convenient to express $g_i$ through the theory of convex duality. Let $G_i$ be a differentiable function such that $g_i(\bar{m})=e^{G_i'(\bar{m})}$ then equation (\ref{KT0}) gives that
\begin{equation}\label{G transform}
 G'^{-1}_i(\log \Lambda_i)=\Lambda_i U'_i(\Lambda_i)\quad \text{or}\quad G'^{-1}_i(\lambda_i)=e^{\lambda_i}U_i'(e^{\lambda_i}).
\end{equation}
Now if $F^*$ is the Legendre-Fenchel transform of a concave differentiable function $F$, i.e. $F^*(\lambda)=-\max_\lambda\{ F(\lambda)-\lambda m\}$ then, by construction, $F^*$ has a derivative that inverts the derivative of $F$, i.e. the inverse of ${F^*}' (\cdot) $ equals $F'(\cdot)$. 
Applying this to \eqref{G transform}, with $G_i(\bar{m})=F^*(\bar{m})$ and $F(\lambda_i)=U_i(e^{\lambda_i})$, we have that 
\begin{align}
 G_i(\bar{m})&=-\max_{\lambda_i}\{ U_i(e^{\lambda_i}) - \bar{m}_i \lambda_i \}\notag\\
&=-\max_{\Lambda_i\geq 0} \{ U_i(\Lambda_i) - \bar{m}_i \log \Lambda_i \}\label{G i}
\end{align}
It is precisely by this function $G_i$ that we are able to replace $\bar{m}_i \log \Lambda_i$, the network problem summand, with the system user's utility $U_i(\Lambda_i)$. 

So that duality can invert this operation, that is $U_i(e^{\lambda})=G_i^*(\lambda)$, we require that $U_i(e^{\lambda})$ is a concave function of $\lambda$.

We note that the optimization used to derive $G_i$ \eqref{G i} is different to the user problem \eqref{user problem} derived by Kelly \cite{Ke97}. This is, inessence, because we chose the rate packets are injected based on the number of packets in transfer. This is in contrast to the user problem of Kelly, which could be interpreted as choosing the number of packets in transfer given the current network delay (or round trip time) of packets. A theoretical advantage to our approach is that queueing networks which inject packets based on the number of packets in transfer are more analytically tractable. In particular, quasi-reversibility results can be applied to explicitly give the stationary distribution of such networks.

In the next two sections, we construct the components of these queueing networks: congestion windows and multi-class queues. In Section \ref{queueing system}, we connect these components together to form a sequence of queueing systems that will execute the above heuristic.

\section{Congestion Windows}\label{cwnd}
Congestion windows keep a record of the number of sent but not yet acknowledged packets in a queueing network. The models of congestion windows considered here are reversible and, thus, lend well to product form results when incorporated into a network \cite{Ke89, As03}. We will later connect these congestion windows to the routes of a network of multi-class single server queues.

When in isolation of a network, we define a congestion window at congestion level $c$ as a continuous time Markov chain $(\bM_i^{(c)}(t):t\in\bR_+)$ on $\bZ_+$ with transition rates,
\begin{equation*}
 q(\bm_i,\bm_i')=
\begin{cases}
 g^{(c)}_i(\bm_i) &\text{if }\bm_i'=\bm_i+1,\\
\Lambda_i &\text{if }\bm_i'=\bm_i-1,\\
0 & \text{otherwise}.
\end{cases}
\end{equation*}
The transition $\bm_i\mapsto \bm_i+1$ are thought of as \textit{injecting a packet} into a network, and for $\bm_i>0$, a transition $\bm_i\mapsto \bm_i-1$ are thought of as \textit{acknowledging a packet} that has been transferred. Thus, we think of $\bar{m}_i$ as recording the number of packets currently in transfer. For the purposes of this paper, it will be convenient to define
\begin{equation*}
 \Lambda_i=e^{\lambda_i}\quad\text{and}\quad g^{(c)}_i(\bm_i)=e^{G^{(c)}_i(\bm_i+1)-G^{(c)}_i(\bm_i)},
\end{equation*}
where $\lambda_i\in\bR$ and $G^{(c)}_i:\bR_+\mapsto [-\infty,\infty)$ is a strictly concave function. Thus a congestion window is defined by a constant $\lambda_i$ and a function $(c,\bar{m}_i)\mapsto G_i^{(c)}(\bar{m}_i)$.

\subsection{Reversibility and stationary behaviour}
We now collect a result about the stationary behaviour of congestion windows.
\begin{proposition}\label{quasi cwnd}
A stationary congestion window is reversible with stationary distribution,
\begin{equation}
 \pi^{(c)}_i(\bm_i)=\pi^{(c)}_i(0)e^{G^{(c)}_i(\bm_i)-\lambda_i\bm_i},\quad \bm_i\in\bZ_+.\label{cwnd ed}
\end{equation}
\end{proposition}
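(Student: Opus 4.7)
The plan is to exploit the birth-death structure of the chain and verify detailed balance directly. Since $(\bM_i^{(c)}(t))$ has only nearest-neighbour transitions on $\bZ_+$, reversibility reduces to checking the single equation
\begin{equation*}
\pi^{(c)}_i(\bm_i)\,q(\bm_i,\bm_i+1)=\pi^{(c)}_i(\bm_i+1)\,q(\bm_i+1,\bm_i),\qquad \bm_i\in\bZ_+,
\end{equation*}
for the proposed measure. This is the only computation needed, and with the log-parametrisation $g^{(c)}_i(\bm_i)=e^{G^{(c)}_i(\bm_i+1)-G^{(c)}_i(\bm_i)}$ and $\Lambda_i=e^{\lambda_i}$ it is immediate: substituting the candidate $\pi^{(c)}_i(\bm_i)=\pi^{(c)}_i(0)e^{G^{(c)}_i(\bm_i)-\lambda_i\bm_i}$ into both sides gives $\pi^{(c)}_i(0)e^{G^{(c)}_i(\bm_i+1)-\lambda_i\bm_i}$. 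So the telescoping structure of the exponents in $G^{(c)}_i$ and $\lambda_i\bm_i$ is exactly matched to the birth and death rates, and detailed balance holds term by term.

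Once detailed balance is verified, two standard facts finish the proof. First, detailed balance implies $\pi^{(c)}_i$ satisfies the global balance equations $\pi^{(c)}_i Q=0$, so $\pi^{(c)}_i$ is an invariant measure. Second, detailed balance is equivalent to reversibility of the stationary chain, so the time-reversed process has the same law. The only non-routine point is the existence of the normalising constant $\pi^{(c)}_i(0)$, i.e. that
\begin{equation*}
\sum_{\bm_i=0}^\infty e^{G^{(c)}_i(\bm_i)-\lambda_i\bm_i}<\infty.
\end{equation*}
This is where strict concavity of $G^{(c)}_i$ enters: the increments $G^{(c)}_i(\bm_i+1)-G^{(c)}_i(\bm_i)$ are strictly decreasing in $\bm_i$, so provided they eventually drop below $\lambda_i$ (which is implicit in the standing assumption that the congestion window operates as a stable stationary process), the ratio test gives geometric decay of the summands and hence summability.

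The main obstacle, if any, is really only a bookkeeping point: writing the rates in the exponential form and recognising that the product $q(\bm_i,\bm_i+1)/q(\bm_i+1,\bm_i)=e^{G^{(c)}_i(\bm_i+1)-G^{(c)}_i(\bm_i)-\lambda_i}$ telescopes when iterated, giving $\pi^{(c)}_i(\bm_i)/\pi^{(c)}_i(0)=e^{G^{(c)}_i(\bm_i)-G^{(c)}_i(0)-\lambda_i\bm_i}$; absorbing $e^{-G^{(c)}_i(0)}$ into the constant $\pi^{(c)}_i(0)$ recovers the stated form. Because the argument is a direct application of Kolmogorov's criterion for birth-death chains, no auxiliary machinery beyond the exponential parametrisation of $g^{(c)}_i$ is needed.
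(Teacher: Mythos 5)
Your proposal is correct and follows essentially the same route as the paper: verify detailed balance for the birth--death chain using $g^{(c)}_i(\bm_i)=e^{G^{(c)}_i(\bm_i+1)-G^{(c)}_i(\bm_i)}$ and $\Lambda_i=e^{\lambda_i}$, then telescope to obtain $\pi^{(c)}_i(\bm_i)=\pi^{(c)}_i(0)e^{G^{(c)}_i(\bm_i)-\lambda_i\bm_i}$. Your additional remarks on summability of the normalising constant are consistent with the proposition's standing hypothesis that the congestion window is stationary, so nothing is missing.
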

\begin{proof}
The result is immediate from the detail balance equations
\begin{equation*}
 \pi^{(c)}_i(\bm_i)=e^{G^{(c)}_i(\bm_i)-G^{(c)}_i(\bm_i-1)-\lambda_i}\pi^{(c)}_i(\bm_i-1)=...=e^{G^{(c)}_i(\bm_i)-\lambda_i m_i}\pi^{(c)}_i(0).
\end{equation*}
\begin{flushright}
$\square$ 
\end{flushright}
\end{proof}

\subsection{Large deviations}\label{LDP cwnd}
We think of the pair $(c,\bar{m}_i)$ as giving the state of a congestion controller, given there is a level of congestion $c$ and the number of packets in transfer is $\bar{m}_i$. As we saw above, if $c$ is fixed a congestion controller describes a reversible Markov chain. In this section, we study the stationary distribution of a sequence of these Markov chains as $c\rightarrow\infty$. As $c$ increases, the stationary number of packets in transfer will increase. This can be interpretted as the congestion controller becoming increasingly aggressive to exploit the network's capacity. In this section, we use large deviation type arguments to study where probability concentrates under this limit.



For each $c\in\bN$, let $\bM^{(c)}_i$ be a stationary congestion window defined by $\lambda_i$ and $G^{(c)}_i(\cdot)$, where $G_i^{(c)}(k)=cG_i\big(\frac{k}{c}+\frac{d_i^{(c)}}{c}\big)$ for function $G_i$ strictly concave, differentiable on $(0,\infty)$ with derivative taking all values in $(-\infty, \infty)$ and with $\{d_i^{(c)}\}_{c\in\bN}$ a bounded sequence with values in $\bR$. We define convex function $G^*_i:\bR\rightarrow \bR$ from $G_i$ with the following Legendre-Fenchel transform
\begin{equation*}
 G^*_i(\lambda_i)=\max_{\bm_i\in\bR_+} \{G_i(\bm_i)-\lambda_i\bm_i\}.
\end{equation*}
We also define
\begin{equation*}
 \bm^*_{\lambda_i}=\argmax_{\bm_i\in\bR_+} \{G_i(\bm_i)-\lambda_i\bm_i\}.
\end{equation*}
In the following, proposition we use large deviation arguments for the purpose of identifying the most probably state of the sequence of random variables $\bM^{(c)}$, $c\in\bN$.
\begin{proposition}\label{LDP cwnd prop}
 a)
\begin{equation*}
 \lim_{c\rightarrow \infty} \frac{1}{c}\log \sum_{k=0}^\infty e^{G^{(c)}_i(k)-\lambda_i k} = G^*_i(\lambda_i),\qquad \lambda_i\in\bR_+.
\end{equation*}
b) For $\bm_i\in(0,\infty)$ and a bounded sequence $\{\bar{\sigma}_i^{(c)}\}_{c\in\bN}$ such that $c\bm_i+\bar{\sigma}_i^{(c)}\in\bZ_+$
\begin{equation*}
 \lim_{c\rightarrow\infty} \frac{1}{c}\log \bP(\bM^{(c)}=c\bm_i+\bar{\sigma}_i^{(c)})=G_i(\bm_i)-\lambda_i-G^*_i(\lambda_i).
\end{equation*}
\end{proposition}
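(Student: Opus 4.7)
The plan is to prove Part~(a) by a discrete Laplace estimate on the normalising constant $Z^{(c)}:=\sum_{k=0}^{\infty}\exp(G_i^{(c)}(k)-\lambda_i k)$, and then to obtain Part~(b) by dividing the explicit stationary mass given by Proposition~\ref{quasi cwnd} by this normalisation and passing to the limit.

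For Part~(a), substituting the hypothesis $G_i^{(c)}(k)=cG_i(k/c+d_i^{(c)}/c)$ displays
\[ Z^{(c)}=\sum_{k=0}^{\infty}\exp\bigl(c\bigl[G_i(k/c+d_i^{(c)}/c)-\lambda_i\,k/c\bigr]\bigr) \]
as a sum over the lattice $c^{-1}\bZ_+$, shifted by the bounded quantity $d_i^{(c)}/c$, of a concave integrand whose supremum is $G_i^*(\lambda_i)$, attained at the unique point $\bm^*_{\lambda_i}>0$. The lower bound $\liminf_c c^{-1}\log Z^{(c)}\geq G_i^*(\lambda_i)$ I would get by retaining a single term: choose $k^{(c)}\in\bZ_+$ with $k^{(c)}/c+d_i^{(c)}/c\to\bm^*_{\lambda_i}$ and invoke continuity of $G_i$ at the interior point $\bm^*_{\lambda_i}$. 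For the upper bound I would exploit strict concavity together with the hypothesis that $G_i'$ attains every value in $(-\infty,\infty)$: the former gives a unique maximiser and the latter forces the envelope $G_i(x)-\lambda_i x$ to fall away with arbitrary slope at $+\infty$. Fixing $A>\bm^*_{\lambda_i}$ with $G_i'(A)<\lambda_i-\beta$ for some $\beta>0$, the estimate $G_i(x)-\lambda_i x\leq G_i(A)-\lambda_i A-\beta(x-A)$ on $(A,\infty)$ produces a geometric tail bounded uniformly in $c$, while the $O(c)$ lattice points in $[0,A]$ each contribute at most $e^{cG_i^*(\lambda_i)+O(1)}$. Together this gives $Z^{(c)}\leq O(c)\cdot e^{cG_i^*(\lambda_i)+O(1)}$, so $c^{-1}\log Z^{(c)}\leq G_i^*(\lambda_i)+o(1)$.

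For Part~(b), Proposition~\ref{quasi cwnd} gives
\[ \bP\bigl(\bM_i^{(c)}=c\bm_i+\bar{\sigma}_i^{(c)}\bigr)=\bigl(Z^{(c)}\bigr)^{-1}\exp\bigl(G_i^{(c)}(c\bm_i+\bar{\sigma}_i^{(c)})-\lambda_i(c\bm_i+\bar{\sigma}_i^{(c)})\bigr). \]
Taking $c^{-1}\log$ and letting $c\to\infty$, the term $-c^{-1}\log Z^{(c)}$ contributes $-G_i^*(\lambda_i)$ by Part~(a); the term $c^{-1}G_i^{(c)}(c\bm_i+\bar{\sigma}_i^{(c)})=G_i(\bm_i+(\bar{\sigma}_i^{(c)}+d_i^{(c)})/c)$ converges to $G_i(\bm_i)$ by continuity of $G_i$ at the interior point $\bm_i>0$, using the boundedness of $\bar{\sigma}_i^{(c)}$ and $d_i^{(c)}$; and $c^{-1}\lambda_i(c\bm_i+\bar{\sigma}_i^{(c)})\to\lambda_i\bm_i$. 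Combining these yields the stated limit. The main obstacle is the upper bound in Part~(a): the state space is infinite, so one cannot simply bound the sum by the largest term times a trivial count, and quantitative control of the decay of $G_i(x)-\lambda_i x$ away from $\bm^*_{\lambda_i}$ is needed --- precisely where strict concavity and the surjectivity of $G_i'$ on $\bR$ are used.
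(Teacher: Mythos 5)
Your argument is correct and is essentially the paper's own: a largest-term/Laplace estimate for the normalising sum, with the lower bound from a single retained lattice term near $\bm^*_{\lambda_i}$ and the upper bound from an $O(c)$-term bulk each bounded by $e^{cG_i^*(\lambda_i)+O(1)}$ plus a geometrically decaying tail (the paper extracts the tail decay from strict concavity around the maximiser, you from surjectivity of $G_i'$ giving a slope $-\beta$ beyond a point $A$ --- an immaterial difference), followed by part (b) via the explicit stationary formula divided by the normalisation. One remark: the limit you actually derive, $G_i(\bm_i)-\lambda_i\bm_i-G_i^*(\lambda_i)$, is the correct one; the ``$-\lambda_i$'' in the displayed statement of the proposition is a typo for $-\lambda_i\bm_i$, as the paper's own proof and its subsequent use of this result confirm.
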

\begin{proof}
 To prove a) we wish to verify the principle of the largest term for this infinite sum \cite[Lemma 2.1]{GOW04}. First we show the upper bound. Let $d=\max_{c}|d^{(c)}_i|$. Since $G_i$ is strictly concave, $\forall \delta>0$ letting
\begin{equation*}
\epsilon=\frac{1}{\delta} \min \{G_i(\bm^*_{\lambda_i})-G_i(\bm^*_{\lambda_i} -\delta) +\delta, G_i(\bm^*_{\lambda_i})-G_i(\bm^*_{\lambda_i}+\delta)-\delta \}>0,
\end{equation*}
we have that for all $\bm\in\bR_+$,
\begin{equation*}
 G_i(\bm_i)-\lambda_i\bm_i\leq G^*_i(\lambda_i)-\epsilon (\bm_i-\bm^*_{\lambda_i}) \bI[\bm_i\geq \bm^*_i+\delta]+  \epsilon(\bm_i-\bm^*_{\lambda_i})\bI[\bm_i\leq \bm^*-\delta].
\end{equation*}
Therefore applying the above inequality and comparing the following with a geometric sum we have,
\begin{align}
 &\sum_{k=0}^\infty e^{G^{(c)}_i(k)-\lambda_i k}=\sum_{k=0}^\infty e^{c[G_i(\frac{k}{c}+\frac{d_i^{(c)}}{c})-\lambda_i \frac{k}{c}]}\notag\\
&\leq e^{cG^*_i(\lambda_i)+d^{(c)}_i}\left[ 2(c\delta+d+1)+2\sum_{k\in\bZ_+} e^{c\epsilon(\frac{k}{c}-\bm^*)}\right]\notag\\
&\leq e^{cG^*_i(\lambda_i)+d^{(c)}_i}\left[ 2(c\delta+d+1)+2\frac{e^{-(c\delta-d-1)}}{1-e^{-\epsilon}}\right].\label{finite sum}
\end{align}
Hence as the first term in the square bracket dominates
\begin{equation*}
 \limsup_{c\rightarrow\infty} \frac{1}{c}\log \sum_{k=0}^\infty e^{G^{(c)}_i(k)-\lambda k} \leq G^*_i(\lambda).
\end{equation*}
By proving the lower bound for a) we can simultaneously verify b). Using the terminology of b), for all $\bm_i\in (0,\infty)$,
\begin{align}
 G_i(\bm_i) -\lambda_i\bm_i &=\lim_{c\rightarrow\infty} \frac{1}{c} \log e^{c G_i(\bm_i+ \frac{\bar{\sigma}^{(c)}_i}{c}+\frac{d^{(c)}_i}{c})-\lambda_ic\bm_i+\bar{\sigma}_i^{(c)}}\label{G lower bound}\\
&\leq \liminf_{c\rightarrow \infty} \frac{1}{c} \log \sum_{k=0}^\infty e^{G^{(c)}_i(k)-\lambda_ik}.\notag
\end{align}
Taking $\bm_i=\bm^*_{\lambda_i}$ and $\bar{\sigma}^{(c)}_i=\lfloor \bm^*_{\lambda_i}\rfloor -\bm^*_{\lambda_i}$ verifies a). Given a) and (\ref{G lower bound}) we have b).
\begin{flushright}
$\square$ 
\end{flushright}
\end{proof}

We now emphasize the following duality between state and flow in congestion windows. Suppose as $c\rightarrow \infty$, the number of packets in transfer is approximately $c\bm^*_i$, for some $\bm_i^*\in(0,\infty)$. Thus the flow out of the congestion window is approximately,
\begin{equation*}
g_i^{(c)}(c\bm^*_i)\approx \exp\Big\{\frac{G_i(\bm^*_i+\frac{1}{c})-G_i(\bm^*_i)}{\frac{1}{c}} \Big\}\xrightarrow[c\rightarrow\infty]{} e^{G'_i(\bm^*_i)}.
\end{equation*}
When stationary the average outward flow of packets from the congestion window equals the average inward flow. Thus, we have that $G'_i(\bm^*_i)=\lambda_i$, or in other words,
\begin{equation*}
 \bm^*_i=\argmax_{\bm_i\in\bR_+}\{G_i(\bm_i)-\lambda_i\bm_i\}.
\end{equation*}
By this duality and the correct choice of $G_i$ (as discussed in Section \ref{choose g}),  we can control the throughput of packets from the congestion window so that it optimizes a utility function.

\subsection{Utility optimization}
The rate packets are acknowledged by a congestion window is $\Lambda_i=e^{\lambda_i}$, thus the utility associated with $\lambda_i$ is $U_i(e^{\lambda_i})$. If we wish to maximize the system problem, we must define $G_i$ through the following user problem,
\begin{equation}
 -G_i(\bar{m}_i)=\max_{\lambda_i\in\bR} \{U_i(e^{\lambda_i})-\bar{m}_i\lambda_i\},\qquad \bar{m}_i\in\bR_+\label{Gi}
\end{equation}
and similarly by the duality of Legendre-Fenchel transforms, we may define $U_i$ from $G_i$ by
\begin{equation}
 U_i(e^{\lambda_i})=\min_{\bar{m}_i\in\bR_+} \{\lambda_i\bar{m}_i-G_i(\bar{m}_i)\}=-G_i^*(\lambda_i),\qquad \lambda_i\in\bR.\label{Ui}
\end{equation}
The function $U_i(e^{\lambda_i})$ must be concave as $G^*_i(\lambda_i)$ convex. Thus in order to optimize a utility function $U_i$, we require the assumption,

\begin{assumption}\label{assump 1}
 The utility function $U_i$ is \textit{exponentially concave}, that is the map $\lambda_i\mapsto U_i(e^{\lambda_i})$ is strictly concave on $\bR$. 
\end{assumption}
\noindent We also collect the differentiability assumptions that we make on $G_i$.
\begin{assumption}\label{assump 2}
 We assume $G_i$ defined by (\ref{Gi}) is strictly concave, (continuously) differentiable on $(0,\infty)$ with derivative taking all values in $\bR$.
\end{assumption}

\begin{remark}[Weighted $\alpha$-fairness, $\alpha>1$]
The weighted $\alpha$-fair family of utility functions considered by Mo and Walrand \cite{MoWa00}, corresponds to the aggregate utility of users with iso-elastic utility, that is utilities,
\begin{equation*}
 U_i(\Lambda_i)=
\begin{cases} 
\frac{w_i\Lambda_i^{1-\alpha}}{1-\alpha} & \text{if } w_i\in\bR_+,\;\alpha>0,\; \alpha\neq 1,\\
w_i\log \Lambda_i & \text{if } w_i\in\bR_+,\;\; \alpha= 1.
\end{cases}
\end{equation*}
The weighted $\alpha$-fair class has proved popular as it contains proportional fairness ($\alpha=w_i=1$), TCP fairness ($\alpha=2$, $w_i=\frac{1}{T^2_i}$), and converges to maximum throughput ($\alpha\rightarrow 0$, $w_i=1$) and max-min fairness ($\alpha\rightarrow\infty$, $w_i=1$).

One can easily verify that $U_i(\Lambda_i)$ is exponentially concave for $\alpha>1$ and that
\begin{equation*}
 G_i(\bar{m}_i)=\frac{\bar{m}_i}{1-\alpha}\log \frac{\bar{m}_i}{w_i} -\frac{\bar{m}_i}{1-\alpha}=\frac{1}{1-\alpha} \int_0^{\bar{m}_i} \log \frac{x}{w_i} dx.
\end{equation*}
So $U_i$ and $G_i$ satisfy the two assumptions above for $\alpha>1$. Thus our results apply for weighted $\alpha$-fairness for $\alpha>1$. The case $\alpha=1$ is considered in \cite{Wa09}. For weighted $\alpha$-fairness $\alpha>1$, a convenient form for $g_i^{(c)}$ and $G_i^{(c)}$ to take is
\begin{equation*}
 g_i^{(c)}(\bar{m}_i)=\left(\frac{cw_i}{\bar{m}_i}\right)^{\frac{1}{\alpha-1}}\quad \text{and}\quad e^{G_i^{(c)}(\bar{m}_i)}=\frac{(cw_i)^{\frac{\bar{m}_i}{\alpha-1}}}{(\bar{m}_i!)^{\frac{1}{\alpha-1}}}.
\end{equation*}
\end{remark}

\section{Multi-class single server queues}\label{multi-class queue}
We define a multi-class single server queue. These queues are quasi-reversible and are described in Kelly \cite{Ke79}. When connected to form a network these queues will process packets over different routes of a queueing system. Congestion windows will regulate the number of packets present on each route. After defining these multi-class queues in this section, we will define this queueing system in Section \ref{queueing system}.

A queue $j\in\mJ$ is fed packets from classes from the set of routes $\{i\in\mI:j\in i\}$. Packets occupy different positions within the queue and have an exponentially distributed mean 1 service requirement. Given there are $m_j\in\bZ_+$ packets at queue $j$, packets occupy positions $1,...,m_j$. The total service devoted to these packets is $C_j\in(0,\infty)$. This fixed service is then divided amongst the different positions in the queue. A proportion $\gamma_j(l,m_j)$ of this capacity is devoted to the packet at position $l=1,...,m_j$. Upon completing its service a packet at position $l$ will depart the queue and the packets at positions $l+1,...,m_j$ will move to positions $l,...,m_j-1$ respectively. In this section we assume packets of route $i$ arrive at queue $j$ as a Poisson process of rate $\Lambda_i$. Upon arrival a packet will move to position $l=1,...,m_j+1$ with probability $\delta_j(l,m_j+1)$ and packets which were in position $l,...,m_j$ will move to positions $l+1,...,m_j+1$.

Let $s^j=(i_1^j,...,i_{m_j}^j)\in\mI^{m_j}$, for $m_j>0$, give the state of queue $j$. Let $T^i_{\cdot, (j,l)}$ denote the arrival of a class $i$ packet to position $l$ in queue $j$ and let $T^i_{(j,l),\cdot}$ denote the departure of class $i$ the packet in position $l$. Thus the state of this queue forms a continuous time Markov chain with transition rates given by,
\begin{equation*}
 q(s^j,s'^j)=
\begin{cases}
 \Lambda_i \delta_j(l,m_j+1) &\text{for } s'^j=T^i_{\cdot, (j,l)} s^j,\; l=1,...,m_j+1,\\
C_j\gamma_j(l,m_j) & \text{for } s'^j=T^i_{(j,l),\cdot} s^j,\; i^j_l=i,\; l=1,...,m_j,\\
0 & \textit{otherwise}.
\end{cases}
\end{equation*}

\subsection{Quasi-reversibility and stationary behaviour}
These queues are known to be quasi-reversible and their stationary distribution is well understood \cite{As03, Ke79}.

\begin{proposition}\label{quasi queue}
provided the following stability condition holds,
\begin{equation*}
 \sum_{i: j\in i} \Lambda_i < C_j,\qquad j\in\mJ,
\end{equation*}
 a multi-class single server queue $j\in\mJ$ is quasi-reversible and $(M_{ji}: i\in \mI, j\in i)$ the stationary number of route $i$  packets at queue $j$ has distribution,
\begin{equation}\label{queue ed}
 \bP(M_{ji}=m_{ji},\: \forall i \ni j)=\left( \frac{C_j-\sum_{i: j\in i} \Lambda_i}{C_j}\right) \left( \begin{array}{cc} m_j \\ m_{ji}\; : \; i\ni j\end{array} \right) \prod_{i : j\in i} \bigg(\frac{\Lambda_i}{C_j}\bigg)^{m_{ji}},
\end{equation}
for  $m_{ji}\in\bZ_+$ for each $i\in\mI$ such that $j\in i$ and where $m_j=\sum_{i: j\in i} m_{ji}$.
\end{proposition}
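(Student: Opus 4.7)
The plan is to treat this queue as an instance of a \emph{symmetric queue} in the sense of Kelly \cite{Ke79}: since the position-selection mechanism for arrivals ($\delta_j(l,m_j+1)$) and the position-weights for service ($\gamma_j(l,m_j)$) have the matching form standard in Kelly's symmetric discipline, the queue falls into the class for which quasi-reversibility is known. The proposition is then essentially the product-form stationary distribution of this class specialised to the multi-class setting.

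First I would guess the stationary distribution on the full ordered state $s^j=(i^j_1,\ldots,i^j_{m_j})$, namely
\begin{equation*}
\pi(s^j) \;=\; \left(1 - \frac{\sum_{i: j\in i}\Lambda_i}{C_j}\right)\prod_{l=1}^{m_j} \frac{\Lambda_{i^j_l}}{C_j},
\end{equation*}
which under the stability condition $\sum_{i:j\in i}\Lambda_i < C_j$ is summable. I would verify stationarity and quasi-reversibility by checking the partial balance equations class by class: for each class $i$ one must show that the total rate of transitions in which a class-$i$ customer is inserted (weighted by $\pi$) matches the total rate of transitions in which a class-$i$ customer is removed from a state containing one. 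The symmetry between $\delta_j(l,\cdot)$ and $\gamma_j(l,\cdot)$ is what makes these equations cancel telescopically. Partial balance class-by-class then yields quasi-reversibility (each class-$i$ departure stream is Poisson of rate $\Lambda_i$ in stationarity), as well as stationarity of $\pi$.

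Finally, to obtain the marginal (\ref{queue ed}) over the count vector $(m_{ji})_{i\ni j}$ I would sum $\pi(s^j)$ over all orderings $(i^j_1,\ldots,i^j_{m_j})$ that yield the prescribed counts. Since $\pi(s^j)$ depends on the ordering only through the multiset of classes, every such ordering contributes the same weight $(1-\rho_j)\prod_{i\ni j}(\Lambda_i/C_j)^{m_{ji}}$ with $\rho_j=\sum_{i\ni j}\Lambda_i/C_j$, and there are $\binom{m_j}{m_{ji}:i\ni j}$ such orderings, giving exactly the multinomial form in the statement.

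The only real obstacle is the bookkeeping of index shifts that occur when inserting a customer at position $l$ and shifting positions $l,\ldots,m_j$ upward, and the analogous downward shift on departure; one must keep track of which $\gamma_j$ and $\delta_j$ appear on each side of the partial balance equation. Since all of this is routine and carried out in Kelly \cite[Ch.~3]{Ke79}, I would essentially reduce the argument to an application of that reference, with the verification of the insertion/removal bookkeeping as the one nontrivial but standard step.
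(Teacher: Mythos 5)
Your argument is correct in substance but follows a genuinely different route from the paper. The paper never works with the ordered state: it notes that, since every packet has the same unit-mean exponential requirement and the total capacity $C_j$ is fixed, the aggregate queue length $M_j$ is a reversible birth--death chain whose stationary law is geometric with parameter $\sum_{i:j\in i}\Lambda_i/C_j$ (finite precisely under the stability condition); reversibility gives that departures before time $t$ form a Poisson process independent of arrivals after $t$, and thinning these streams with probabilities $\Lambda_i/\sum_{r:j\in r}\Lambda_r$ yields the class-level arrival and departure processes, hence quasi-reversibility, and the multinomial class composition, hence (\ref{queue ed}). Your route --- guess the ordered-state product form $\pi(s^j)=(1-\rho_j)\prod_{l}\Lambda_{i^j_l}/C_j$, verify partial balance class by class, then sum over the orderings compatible with the counts --- is essentially the proof of Kelly's Theorem 3.1 itself; it is more computational, but it makes the ordered-state distribution explicit, which is in fact what the network-level construction in Theorem \ref{quasi system} uses.

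One correction to your justification: this is not a symmetric queue, and no matching between $\delta_j$ and $\gamma_j$ is assumed in the model or needed for the proof. The partial balance cancellations come from $\sum_{l}\gamma_j(l,m_j)=1$ and $\sum_{l}\delta_j(l,m_j)=1$ separately; for instance, the stationary rate of class-$i$ departures leading into a state $s^j$ is $\pi(s^j)\frac{\Lambda_i}{C_j}\sum_{l=1}^{m_j+1}C_j\gamma_j(l,m_j+1)=\pi(s^j)\Lambda_i$, with no reference to $\delta_j$. What makes an arbitrary pair $(\gamma_j,\delta_j)$ harmless is that all classes have identical exponential unit-mean requirements served at fixed total rate $C_j$; were the service distributions class-dependent, you would indeed need the symmetric discipline $\delta_j=\gamma_j$. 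Note also that in the paper's Theorem \ref{quasi system} the reversed system takes $\tilde{\delta}_j=\gamma_j$ and $\tilde{\gamma}_j=\delta_j$, i.e.\ the queue is quasi-reversible but in general not symmetric, so the reference to lean on is Kelly's Theorem 3.1 (the multi-class queue with class-independent exponential service) rather than the symmetric-queue results.
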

\begin{proof}
 Let $M_j$ be the number of packets at queue $j$. Since the queue does not discriminate between different packets' classes, $M_j$ is a reversible Markov chain and thus its departures prior to time $t$ form a Poisson process independent of the Poisson process of arrivals after time $t$. By thinning these Poisson processes with probability $\frac{\Lambda_i}{\sum_{j: r \in j}\Lambda_r}$ we obtain the arrival and departure processes of route $i$ packets and thus deduce that the queue is quasi-reversible. The stationary distribution of $M_j$ is geometric with parameter $\frac{\sum_{r: j\in r} \Lambda_i}{C_j}$ combining this with the same thinning argument and summing over states $s^j$ obtaining $(m_{ji}:i\in\mI, j\in i)$ we obtain (\ref{queue ed}). For more details see Kelly \cite[Theorem 3.1]{Ke79} or Asmussen \cite[\S IV.4]{As03}.
\begin{flushright}
$\square$ 
\end{flushright}
\end{proof}

\subsection{Large deviations}\label{LDP queue}
We study the stationary distribution (\ref{queue ed}) when the number of packets of each class is increased proportionately, but all other queueing parameters are kept constant. We collect some large deviations results on these queues in this regime \cite{Pi79,Wa09}.

\begin{proposition}\label{LDP queue prop}
 For $j\in\mJ$, let $(M_{ji}:i\in\mI, j\in i )$ have distribution (\ref{queue ed}).\\
a) Let $m^j=(m_{ji}: i\in\mI,j\in i)$ and $\sigma^{j,(c)}=(\sigma^{(c)}_{ji}: i\in\mI, j\in i)$, $c\in\bN$ be such that $m_{ji}\in\bR_+$, $\sup_{c\in\bN}||\sigma^{j,(c)}||<\infty$ and $cm_{ji}+\sigma_{ji}^{(c)}\in\bZ_+$ $\forall i$ with $j\in i$, then,
\begin{equation*}
 \lim_{c\rightarrow \infty} \frac{1}{c} \log \bP (M_{ji}=cm_{ji} +\sigma_{ji}^{(c)},\: \forall i\ni j)=-\beta_j(m^j),
\end{equation*}
where,
\begin{equation*}
 \beta_j(m^j)=\sum_{\substack{i:j\in i\\ m_j>0}} m_{ji}\log \frac{m_{ji}C_j}{m_j\Lambda_i}.
\end{equation*}
\noindent b) The function $\beta_j(m^j)$ is continuous, convex and is such that
\begin{equation}\label{betaj}
 \inf_{m^j\geq 0} \beta_j(m^j) =
\begin{cases}
 0 &\text{if } \sum_{i: j\in i} \Lambda_i \leq C_j,\\
-\infty &\text{otherwise}.
\end{cases}
\end{equation}
\end{proposition}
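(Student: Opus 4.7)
The plan is to attack part a) by taking the explicit stationary distribution from Proposition \ref{quasi queue}, substituting $m_{ji} = cm_{ji} + \sigma_{ji}^{(c)}$, taking logarithms, dividing by $c$, and applying Stirling's formula. For part b) I will verify the three claims (continuity, convexity, and the infimum) using properties of relative entropy together with the log-sum inequality.

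For part a), writing $n_{ji}^{(c)} = cm_{ji} + \sigma_{ji}^{(c)}$ and $n_j^{(c)} = \sum_{i:j\in i} n_{ji}^{(c)}$, Proposition \ref{quasi queue} gives
\begin{equation*}
\bP(M_{ji} = n_{ji}^{(c)},\; \forall i\ni j) = \frac{C_j - \sum_{i:j\in i}\Lambda_i}{C_j}\cdot\frac{n_j^{(c)}!}{\prod_{i:j\in i} n_{ji}^{(c)}!}\prod_{i:j\in i}\left(\frac{\Lambda_i}{C_j}\right)^{n_{ji}^{(c)}}.
\end{equation*}
The prefactor is a strictly positive constant (by stability), so its logarithm is $O(1)$ and vanishes after dividing by $c$. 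The terms $n_{ji}^{(c)}\log(\Lambda_i/C_j)$ contribute, after dividing by $c$, the limit $\sum_{i:j\in i} m_{ji}\log(\Lambda_i/C_j)$, since $\sigma_{ji}^{(c)}/c\to 0$. For the multinomial coefficient, Stirling's formula $\log n! = n\log n - n + O(\log n)$ applied to each factorial yields
\begin{equation*}
\frac{1}{c}\log \frac{n_j^{(c)}!}{\prod_{i:j\in i} n_{ji}^{(c)}!} \xrightarrow[c\to\infty]{} m_j\log m_j - \sum_{i:j\in i} m_{ji}\log m_{ji},
\end{equation*}
where the boundedness of $\sigma^{j,(c)}$ ensures the error terms are $O(\log c/c)$. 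Combining these pieces, and using the convention $0\log 0 = 0$ to handle any $m_{ji}=0$ (in which case either $n_{ji}^{(c)}$ is bounded and contributes nothing, or requires separate but analogous treatment), we obtain
\begin{equation*}
\lim_{c\to\infty}\frac{1}{c}\log \bP(\cdots) = \sum_{i:j\in i} m_{ji}\log\frac{m_j\Lambda_i}{m_{ji}C_j} = -\beta_j(m^j).
\end{equation*}
The only delicate point here is the case $m_j = 0$ (so all $m_{ji} = 0$), where all $n_{ji}^{(c)}$ are bounded and the probability in question tends to a positive constant, giving limit $0 = -\beta_j(0)$.

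For part b), continuity on $\bR_+^{\{i:j\in i\}}$ follows because each summand $m_{ji}\log(m_{ji}C_j/(m_j\Lambda_i))$ extends continuously to the boundary under the convention $0\log 0 = 0$; note the summand vanishes whenever $m_{ji}=0$, and when $m_j=0$ the whole function is declared to be $0$. For convexity, the perspective of the convex function $x\log x$, namely $(x,y)\mapsto x\log(x/y)$, is jointly convex on $\bR_+^2$; composing with the linear maps $m^j\mapsto (m_{ji}, m_j)$ and summing, together with the linear term $-\sum_i m_{ji}\log(\Lambda_i/C_j)$, shows $\beta_j$ is convex. For the infimum, the log-sum inequality applied with $a_i = m_{ji}$ and $b_i = m_j\Lambda_i/C_j$ yields
\begin{equation*}
\beta_j(m^j) \geq m_j\log\frac{m_j}{\,m_j\sum_{i:j\in i}\Lambda_i/C_j\,} = m_j\log\frac{C_j}{\sum_{i:j\in i}\Lambda_i},
\end{equation*}
which is nonnegative precisely when $\sum_{i:j\in i}\Lambda_i\leq C_j$; since $\beta_j(0)=0$, the infimum is $0$. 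Conversely, if $\sum_{i:j\in i}\Lambda_i > C_j$, taking $m_{ji} = t\Lambda_i$ gives $\beta_j(m^j) = t\bigl(\sum_{i:j\in i}\Lambda_i\bigr)\log(C_j/\sum_{i:j\in i}\Lambda_i)\to -\infty$ as $t\to\infty$, yielding \eqref{betaj}.

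The main obstacle I anticipate is keeping careful track of boundary cases where some $m_{ji}=0$ in part a): one must check that bounded perturbations $\sigma_{ji}^{(c)}$ do not spoil either the Stirling approximation or the log-ratio identity, and that the convention $0\log 0 = 0$ is consistent with the limiting probabilities on these boundary strata. Everything else reduces to standard Stirling estimates and convex-analytic facts about relative entropy.
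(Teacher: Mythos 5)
Your proof is correct, and for part a) and for the continuity and infimum claims in part b) it follows essentially the paper's own route: Stirling applied to the multinomial coefficient of \eqref{queue ed} (with the $\log c$ and linear terms cancelling because $m_j=\sum_{i:j\in i}m_{ji}$, and bounded $\sigma^{j,(c)}$ only contributing $o(1)$), and the lower bound $\beta_j(m^j)\geq m_j\log\frac{C_j}{\sum_{i:j\in i}\Lambda_i}$, which is the paper's relative-entropy decomposition $\beta_j(m^j)=m_jD(p\|q)+m_j\log\frac{C_j}{\sum_r\Lambda_r}$ in the equivalent guise of the log-sum inequality; your handling of the boundary strata $m_{ji}=0$ and $m_j=0$ is if anything more explicit than the paper's. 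The one genuinely different ingredient is convexity: you obtain it from joint convexity of the perspective function $(x,y)\mapsto x\log(x/y)$ composed with the linear maps $m^j\mapsto(m_{ji},m_j)$, plus a linear term, whereas the paper proves it by exhibiting the variational representation
\begin{equation*}
\beta_j(m^j)=\max\Big\{\sum_{i:j\in i}m_{ji}\phi_i \,:\, \sum_{i:j\in i}\Lambda_ie^{\phi_i}\leq C_j,\ \phi_i\in\bR\Big\},
\end{equation*}
i.e.\ \eqref{beta LF}, so that $\beta_j$ is a supremum of linear functions. Your argument is more self-contained and arguably cleaner for the proposition itself; the paper's pays off later, since \eqref{beta LF} is reused verbatim in the Legendre--Fenchel computation of $F^*$ in the proof of Theorem \ref{LDP}, so if you adopt the perspective-function route you would still need to establish \eqref{beta LF} (or an equivalent identity) at that later point.
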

\begin{proof}
 \noindent a) Define $\sigma_j^{(c)}=\sum_{i:\; j\in i} \sigma_{ji}^{(c)}$. By Stirling's formula $$ \lim_{c\rightarrow\infty} \frac{1}{c}\log (cm_{ji}+\sigma_{ji}^{(c)})!= m_{ji}\log m_{ji} -m_{ji}.$$ Thus,
\begin{align*}
&\lim_{c\rightarrow\infty} \frac{1}{c} \log \bP(M_{ji}=cm_{ji}+\sigma^{(c)}_{ji}, i\ni j)\\
=& \lim_{c\rightarrow \infty} \frac{1}{c} \left[ \log (cm_j + \sigma_j^{(c)})!  -\sum_{i: j\in i} \log (cm_{ji}+\sigma_{ji}^{(c)})! + \sum_{i: j\in i} (cm_{ji} +\sigma_{ji}^{(c)})\log \frac{\Lambda_i}{C_j}\right]\\
=& -\lim_{c\rightarrow\infty} \sum_{\substack{i: j\in i\\ m_j>0}} m_{ji}\log \frac{(m_{ji}+\frac{\sigma_{ji}^{(c)}}{c})C_j}{(m_j+\frac{\sigma_j^{(c)}}{c})\Lambda_i} =-\beta_j(m^j).
\end{align*}

\noindent b) Taking $x\log x=0$ for $x=0$, $x\log x$ is continuous on $\bR_+$, thus $\beta_j$ is continuous. We next prove (\ref{betaj}). For two probability distributions $p$ and $q$ with the same support on $\mK$, we define their relative entropy to be $D(p||q)=\sum_s p_s\log \frac{p_s}{q_s}$. In particular one can verify
\begin{equation}
 \min_p D(p||q)=0, \label{entropy}
\end{equation}
and is minimized by $p=q$. Thus taking $p=(\frac{m_{ji}}{m_j}: i\in\mI, j\in i)$ and $q=(\frac{\Lambda_i}{\sum_{r: j\in r} \Lambda_r}: i\in\mI,\: j\in i )$,
\begin{align*}
 \inf_{m^j\geq 0} \beta_j(m^j)&=\inf_{m^j> 0} m_j \left( \sum_{i: j\in i} \frac{m_{ji}}{m_j} \log \frac{m_{ji}\sum_{r:j\in r} \Lambda_r}{m_j\Lambda_i} \right) + m_j \log \frac{C_j}{\sum_{r: j\in r} \Lambda_r}\\
& =\inf_{m^j>0} m_j \log \frac{C_j}{\sum_{r: j\in r} \Lambda_r} =
\begin{cases}
 0 & \text{if } \sum_{r: j\in r} \Lambda_r \leq C_j,\\
-\infty &\text{otherwise.}
\end{cases}
\end{align*}
Finally to show that $\beta_j(\cdot)$ is convex one can verify that 
\begin{align}
 \beta_j(m^j) = \max\;\: \sum_{i:j\in i} m_{ji}\phi_i \quad &\text{subject to}\;\: \sum_{i:j\in i} \Lambda_i e^{\phi_i} \leq C_j \notag\\
&\text{over}\quad \phi_i\in\bR, \quad i\ni j.\label{beta LF}
\end{align}
Thus $\beta_j(\cdot)$ is expressible as a Legendre-Fenchel transform and so is convex.
\begin{flushright}
$\square$ 
\end{flushright}
\end{proof}

\section{A queueing system}\label{queueing system}

We now connect together the queues and congestion windows discussed in the last two sections to form a network. The interior of this network consists of a set of multi-class queues with routes defined over these queues. We think of this as a simple model of a packet switching network. The number of packets in transfer on a route are determined by congestion windows, as defined in Section \ref{cwnd}. The queueing system, defined by these queues and congestion windows, models the congestion control of a packet switched communication network with a fixed number of document transfers in progress. 

In this section, using quasi-reversibility results and for $c$ fixed, we calculate the stationary distribution of this queueing system. 
For these stationary distributions, we increase $c$, the congestion level of each congestion window and, using large deviations techniques, we calculate where probability concentrates.
We show that the most likely state is given an entropy optimization and its dual is the system problem. In particular, we prove that $\Lambda^{(c)}$ the stationary rate packets traverse the network converges to the solution to the system problem (\ref{system1}-\ref{system3}).

We consider a network of queues indexed by the set $\mJ$ and congestion windows indexed by the set of routes $\mI$. Each queue $j\in\mJ$ will process packets as described in Section \ref{multi-class queue}, but transitions between queues will be prompted by transitions within the queueing system (rather than by a Poisson process). Similarly, each congestion window will send packets into the network as described in Section \ref{cwnd}, but transitions $\bar{m}_i\mapsto \bar{m}_i-1$ will be prompted by the successful transfer of a packet in the queueing network. 

If with route $i$ we associate route order $(j_1^i,...,j_{k_i}^i)$, a packet injected by congestion window $i$ will prompt an arrival at queue $j_1^i$. Also the departure of a route $i$ packet from queue $j^i_{k}$, $k=1,...,k_i-1$ will prompt an arrival at queue $j^i_{k+1}$ and similarly a departure of a route $i$ packet from the final queue $j^i_{k_i}$ will prompt a transition $\bar{m}_i\rightarrow \bar{m}_i-1$ at congestion window $i$. In this way, packets are sent into the network, transferred along their route and finally acknowledged.

We more explicitly describe the state of our queueing system as follows. As in Section \ref{multi-class queue} we let $s^j=(i^j_1,...,i^j_{m_j})$ record the state of queue $j$ and let $s=(s^j:j\in\mJ)$ record the state of our queueing system. Also we let $\bar{m}_i$ record the state of congestion window $i$ and let $\bar{m}=(\bar{m}_i: i\in \mI)$ record the state of our congestion windows. Finally let $\underline{s}=(s,\bar{m})$ record the explicit state of our queueing system. We define the transitions in our queueing network as follows. Let $\underline{s}\mapsto T^i_{\cdot, (j,l)} \underline{s}$ define the transition corresponding to a route $i$ packet injected by congestion window $i$ and arriving at position $l$ in queue $j$. Let $\underline{s}\mapsto T^i_{(j,l), (j',l')} \underline{s}$ denote the departure of a route $i$ packet from position $l$ of queue $j$ which arrives at position $l'$ in queue $j'$. Finally, let $\underline{s}\mapsto T^i_{(j,l), \cdot} \underline{s}$ denote the departure of a route $i$ packet from position $l$ of queue $j$ which are then acknowledged at congestion window $i$.

We define our queueing system corresponding to congestion level $c$ to be a continuous time Markov chain with the following transition rates
\begin{equation}\label{rates} 
q(\underline{s},\underline{s}')=
\begin{cases}
g^{(c)}_i(\bar{m}_i)\delta_j(l,m_j+1) & \text{for } \underline{s}'=T^i_{\cdot, (j,l)} \underline{s},\;\; j=j_1^i,\\
& \;\; l=1,...,m_j+1,\\
C_j\gamma_j(l,m_j)\delta_{j'}(l',m_{j'}+1) & \text{for } \underline{s}'=T^i_{(j,l),(j',l')}\underline{s},\; j=j_k^i,\; \\
&\;j'=j_{k+1}^i\; k=1,...,k_i-1,\\
&\;i^j_l=i\; l'=1,...,m_{j'}+1,\\
C_j\gamma_j(l,m_j)&\text{for } \underline{s}'=T^i_{(j,l),\cdot} \underline{s},\; j=j_{k_i}^i,\; i_l^j=i.\\
0 &\text{otherwise.}
\end{cases}
\end{equation}
Also like in Section \ref{multi-class queue}, we let $m=(m_{ji}:(j,i)\in \mK)$ record the number of route $i$ packets at queue $j$. Also recall the expressions for $m_j$ and $\bar{m}_i$, (\ref{mj}) and (\ref{mbar}), the number of packets in each queue and in each congestion window. Finally recall we define $G^{(c)}_i$ from $g^{(c)}_i$ by the relation $g^{(c)}_i(\bm_i)=e^{G^{(c)}_i(\bm_i+1)-G^{(c)}_i(\bm_i)}$.

\subsection{Quasi-reversibility and stationary behaviour}
From Proposition \ref{quasi cwnd} and Proposition \ref{quasi queue}, our queueing system consists of a network of quasi-reversible nodes. Thus as considered in Kelly \cite{Ke81} networks of quasi-reversible nodes have a stationary distribution that is described by multiplying the distributions (\ref{cwnd ed}) and (\ref{queue ed}). We prove this in the following theorem.

\begin{theorem}\label{quasi system}
For a stationary queueing system defined by rates (\ref{rates}). Let $M=(M_{ji}:(j,i)\in\mK)$ record the stationary number of packets of each route at each queue, then $M$ has distribution,
\begin{equation}
 \bP(M=m)=\frac{1}{B_{G^{(c)}}} \prod_{j\in\mI} \left( \begin{array}{cc} m_j \\ m_{ji}\: :\: i\ni j \end{array} \right) \frac{1}{C_j^{m_j}} \times \prod_{i\in\mI} e^{G^{(c)}_i(\bar{m}_i)},\qquad m\in\bZ_+^K,\label{ed}
\end{equation}
where,
\begin{equation*}
 B_{G^{(c)}}=\sum_{m\in\bZ_+^K} \prod_{j\in\mJ} \left( \begin{array}{cc} m_j \\ m_{ji}\: :\: i\ni j \end{array} \right)\frac{1}{C_j^{m_j}} \times \prod_{i\in\mI} e^{G^{(c)}_i(\bar{m}_i)}.
\end{equation*}
\end{theorem}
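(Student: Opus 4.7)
The plan is to apply the product-form theorem for networks of quasi-reversible nodes in the style of Kelly~\cite{Ke81}, combined with the marginal distributions already derived. Propositions~\ref{quasi cwnd} and~\ref{quasi queue} guarantee that each congestion window and each multi-class queue is quasi-reversible in isolation, so the main task is to identify the correct throughput parameters on each route, form the product of the individual marginals, and verify that the auxiliary parameters drop out to leave exactly~\eqref{ed}.

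First, I would solve the flow equations for the network. Because route $i$ visits the queues in the fixed order $(j_1^i,\ldots,j_{k_i}^i)$ and every injection by window $i$ is ultimately acknowledged at window $i$, the flow equations are satisfied by a single as-yet-undetermined rate $\Lambda_i = e^{\lambda_i}$ per route: this is simultaneously the injection rate of window $i$, the throughput of route $i$ packets through each queue on the route, and the acknowledgement rate back to window $i$. The parameter $\lambda_i$ is free at this stage and will cancel out of the final expression.

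Next I would multiply the marginals. From Proposition~\ref{quasi cwnd}, window $i$ driven by acknowledgements at rate $\Lambda_i = e^{\lambda_i}$ contributes the factor $\propto e^{G_i^{(c)}(\bar{m}_i)-\lambda_i\bar{m}_i}$. From Proposition~\ref{quasi queue}, queue $j$ driven by Poisson arrivals of rates $(\Lambda_i : i\ni j)$ contributes the factor $\propto \binom{m_j}{m_{ji}:i\ni j}\prod_{i:j\in i}(\Lambda_i/C_j)^{m_{ji}}$. Using $\bar{m}_i = \sum_{j\in i} m_{ji}$, the identity
$$\prod_{j\in\mJ}\prod_{i:\,j\in i}\Lambda_i^{m_{ji}} = \prod_{i\in\mI}\Lambda_i^{\bar{m}_i} = \prod_{i\in\mI} e^{\lambda_i\bar{m}_i}$$
cancels precisely the factors $e^{-\lambda_i\bar{m}_i}$ coming from the window marginals. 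What remains is exactly the right-hand side of~\eqref{ed}, with the remaining $m$-independent constants absorbed into $B_{G^{(c)}}$.

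The main obstacle is the rigorous application of the quasi-reversible product-form theorem in the present closed-loop setting. One must check that (i) after thinning by route class, the output process of each queue behaves as a Poisson process of rate $\Lambda_i$ driving the next quasi-reversible component in the chain, and (ii) closing the loop through the congestion window—so that departures from the last queue on route $i$ decrement $\bar{m}_i$, while injections from window $i$ feed the first queue on route $i$—is consistent with the hypotheses of Kelly's theorem. An alternative that sidesteps the abstract framework is to verify the global balance equations for~\eqref{ed} directly against the transition rates~\eqref{rates}, using the detailed balance of Proposition~\ref{quasi cwnd} for the window transitions and the multinomial partial balance appearing in the proof of Proposition~\ref{quasi queue} for the queue transitions; the cancellations above then reappear term by term in that verification.
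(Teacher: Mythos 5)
Your multiplier cancellation is the right algebra and does reproduce \eqref{ed}, but as a proof the proposal has genuine gaps, and you have in effect flagged the main one yourself without resolving it. The congestion window is not a node that packets flow through: $\bar{m}_i$ is the deterministic function $\sum_{j\in i}m_{ji}$ of the queue contents, so the system is really an open network of multi-class queues whose route-$i$ injection rate $g_i^{(c)}(\bar{m}_i)$ depends on the current route population, and the joint law is \emph{not} the product of the isolated marginals of Propositions \ref{quasi cwnd} and \ref{quasi queue} (the paper stresses exactly this immediately after the theorem statement). The standard product-form theorem for networks of quasi-reversible nodes assumes fixed-rate Poisson inputs to each node, so invoking \cite{Ke81} here requires either an extension covering population-dependent arrival rates on a constrained state space or a direct verification; checking your conditions (i) and (ii) \emph{is} the content of the theorem, not a routine afterthought. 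Your fallback of ``verifying global balance for \eqref{ed} directly against the rates \eqref{rates}'' also cannot be carried out as stated: \eqref{ed} is a distribution on the aggregated counts $m\in\bZ_+^K$, while \eqref{rates} acts on the detailed state $\underline{s}$ recording packet positions, and the count process alone is not Markov (departure rates depend on which positions route-$i$ packets occupy, through $\gamma_j(l,m_j)$).

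What the paper actually does, and what your second route needs to become, is a detailed-state argument: guess the time-reversed chain explicitly (packets traverse routes in reverse order, with $\gamma_j$ and $\delta_j$ interchanged, windows unchanged), verify Kelly's Lemma for the measure $\pi^{(c)}(\underline{s})=\prod_{j}C_j^{-m_j}\prod_i e^{G_i^{(c)}(\bar{m}_i)}$ against the three transition types of \eqref{rates}, check that forward and reversed exit rates agree, and only then sum over the orderings of packets within each queue consistent with $m$ --- that summation is where the multinomial coefficient in \eqref{ed} comes from, a step absent from your outline. Finally, you never address finiteness of $B_{G^{(c)}}$, which is needed to normalize the invariant measure into the stationary distribution: the paper obtains it by rewriting $B_{G^{(c)}}$ with factors $e^{\lambda_i m_{ji}}$ and $e^{G_i^{(c)}(\bar{m}_i)-\lambda_i\bar{m}_i}$, bounding the latter via Assumption \ref{assump 2}, and choosing $\lambda$ with $\sum_{i:j\in i}e^{\lambda_i}<C_j$ --- the same cancellation you use, but deployed where it is actually needed.
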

Note that distribution $\bP(M=m)$ is not a product form stationary distribution because we require the constraint $\bar{m}_i=\sum_{j\in i} m_{ji}$, $\forall i\in\mI$.
\begin{proof}
A good candidate for the time reversal of this queueing system is defined by rates $\tilde{q}(\cdot,\cdot)$, where packets follow route $i$ in reverse order $(j_{k_i}^i,...,j^i_1)$, where queues $j\in\mJ$ operate at capacity $C_j$ with $\tilde{\delta}(l,m_j)=\gamma_j(l,m_j)$ and $\tilde{\gamma}_j(l,m_j)=\delta_j(l,m_j)$ and where, as before, congestion window $i\in\mI$ sends packets into the network at rate $g^{(c)}_i(\bar{m}_i)=e^{G^{(c)}_i(\bar{m}_i+1)-G^{(c)}_i(\bar{m}_i)}$. We show
\begin{equation*}
 \pi^{(c)}(\underline{s})=\prod_{j\in\mJ} \frac{1}{C_j^{m_j}}\times \prod_{i\in\mI} e^{G^{(c)}_i(\bar{m}_i)},
\end{equation*}
forms an invariant measure for our explicit Markov chain description. We verify Kelly's Lemma \cite[Theorem 1.13]{Ke79} for our three types of transition: packet injections from congestion windows, transitions between queues, and acknowledgements at congestion windows.

For a packet injected by a congestion window, for $j=j_1^i, \;\; l=1,...,m_j+1,$
\begin{equation*}
\frac{ q(\underline{s},T_{\cdot,(j,l)}^i\underline{s})}{\tilde{q}(T^i_{\cdot, (j,l)} \underline{s}, \underline{s})} = \frac{g^{(c)}_i(\bar{m}_i)\delta_j(l,m_j+1)}{C_j {\gamma}_j(l,m_j+1)}=\frac{\pi^{(c)}(T^i_{\cdot, (j,l)}\underline{s})}{\pi^{(c)}(\underline{s})}.
\end{equation*}
For a transition between queues: for $j=j_k^i$, $j'=j_{k+1}^i,$ $k=1,...,k_i-1$, $i^j_l=i$, $l'=1,...,m_{j'}+1$,
\begin{equation*}
\frac{ q(\underline{s},T_{(j,l)(j',l')}^i\underline{s})}{\tilde{q}(T^i_{(j,l), (j',l')} \underline{s}, \underline{s})}=\frac{C_j\gamma_j(l,m_j)\delta_{j'}(l',m_j'+1)}{C_{j'}\tilde{\gamma}_{j'}(l,m_{j'}+1)\tilde{\delta}_j(l,m_j)}=\frac{\pi^{(c)}(T^i_{(j,l),(j',l'))}\underline{s})}{\pi^{(c)}(\underline{s})}.
\end{equation*}
For an acknowledgement at a congestion window $i$, for $j=j_{k_i}^i$, $i=i_l^j$ 
\begin{equation*}
 \frac{q(\underline{s},T^i_{(j,l),\cdot}\underline{s})}{\tilde{q}(T^i_{(j,l),\cdot} \underline{s},\underline{s})}=\frac{C_j\gamma_j(l,m_j)}{g^{(c)}_i(\bar{m}_i-1)\tilde{\delta}_j(l,m_j)}=\frac{\pi^{(c)}(T^i_{(j,l),\cdot}\underline{s})}{\pi^{(c)}(\underline{s})}.
\end{equation*}
We note that the transition intensity of the reversed process agrees with that of the forward process
\begin{align*}
 \tilde{q}(\underline{s})&=-\sum_{\underline{s}'} \tilde{q}(\underline{s},\underline{s}')=\sum_{i\in\mI} g^{(c)}_i(\bar{m}_i) + \sum_{j: m_j>0} C_j =q(\underline{s}).
\end{align*}
This verifies Kelly's Lemma and thus $\pi^{(c)}(\underline{s})$ is an invariant measure. Note $B_{G^{(c)}}$ is expressable as 
\begin{equation*}
 B_{G^{(c)}}=\sum_{m\in\bZ_+^K} \prod_{j\in\mJ}\left[ \left( \begin{array}{cc} m_j \\ m_{ji}\: :\: i\ni j \end{array} \right)\prod_{i: j\in i}\left( \frac{e^{\lambda_i}}{C_j}\right)^{m_{ji}} \right]\times \prod_{i\in\mI} e^{G^{(c)}_i(\bar{m}_i)-\lambda_i \bar{m}_i},
\end{equation*}
for all $\lambda\in\bR^I$. By Assumption 2 the function $\bar{m}_i\mapsto G^{(c)}_i(\bar{m}_i)-\lambda_i \bar{m}_i$ is bounded from above for all $\lambda_i$. Applying this upper bound and choosing $\lambda\in\bR^I$ such that $\sum_{j\in i} e^{\lambda_i} < C_j$, the sum for $B_{G^{(c)}}$ is finite. Thus summing over states of invariant measure $\pi^{(c)}(\underline{s})$ gives the stationary distribution (\ref{ed}).
\begin{flushright}
$\square$ 
\end{flushright}
\end{proof}

\subsection{Large deviations}\label{LDP system}

We now study the large deviations behaviour of our stationary queueing systems as $c\rightarrow\infty$. As $c$ increases the congestion windows increase the number of packets within the queueing system and we study its large deviations behaviour. Once again we think of each congestion controller attempting to exploit network capacity by increasing $c$ and congesting the network. We will relate the most likely state in our queueing system to the solution of the system problem.

We use the same notation from Sections \ref{cwnd} and \ref{multi-class queue}. We, also, assume Assumptions \ref{assump 1} and \ref{assump 2} hold. As in Section \ref{cwnd}, we consider a sequence of congestion windows defined by $G_i^{(c)}(k)=cG_i(\frac{k}{c}+\frac{d_i^{(c)}}{c})$ for $i\in\mI$, $c\in\bN$. Here $G_i$ is expressible in terms of utility function $U_i$ by (\ref{Ui}). We define the function,
\begin{equation}
 \beta_{G,\lambda}(m,\tilde{m})=\sum_{\substack{(j,i)\in\mK:\\m_{ji}>0}} m_{ji}\log \frac{m_{ji}C_j}{m_je^{\lambda_i}} -\sum_{i\in\mI} \{G_i(\tilde{m}_i)-\lambda_i\bar{m}_i\},\label{lagrangian}
\end{equation}
for $m\in\bR_+^K$, $\tilde{m}\in\bR_+^I$ and $\lambda_i\in\bR^I$. We use the shorthand that $\beta_G=\beta_{G,\lambda}$ for $\lambda=0$ and we also use the shorthand $\beta_G(m)=\beta_G(m,\tilde{m})$ when $\tilde{m}_i=\sum_{j\in i} m_{ji}$ for all $i\in\mI$. We define,
\begin{align}
 \beta_G^*=&\min \sum_{\substack{(j,i)\in\mK:\\m_{ji}>0}} m_{ji}\log \frac{m_{ji}C_j}{m_j} -\sum_{i\in\mI} G_i(\tilde{m}_i)\quad\text{subject to}\quad \sum_j m_{ji} =\bar{m}_i,\quad i\in\mI\label{primal1}\\
&\text{over}\quad  m\in\bR_+^K,\quad \bar{m}\in\bR_+^I\label{primal3}. 
\end{align}
For each $c$ fixed, consider a stationary queueing system defined by rates (\ref{rates}) with congestion windows defined by $G_i^{(c)}(\cdot)$, $i\in\mI$. Let $M^{(c)}=(M_{ji}^{(c)}: (j,i)\in\mK)$ record the stationary distribution of the number of packets on each route at each queue in our queueing system, (\ref{ed}). We characterise the large deviations of our sequence of queueing systems with the following theorem.

\begin{theorem}\label{LDP}
The sequence $\frac{M}{c}^{(c)}$, $c\in\bN$ obeys a large deviations principle with good rate function $\beta_G(\cdot)-\beta_G^*$. That is for all $D\subset \bR_+^K$,
\begin{align*}
 -\!\!\inf_{m\in D^{\circ}} \!(\beta_G(m)-\beta_G^*) &\leq \liminf_{c\rightarrow \infty} \bP^{(c)}\Big(\frac{M}{c}^{(c)}\!\!\!\in D\Big)\\
& \leq \limsup_{c\rightarrow \infty} \bP^{(c)}\Big(\frac{M}{c}^{(c)}\!\!\!\in D\Big) \leq  -\!\!\inf_{m\in \bar{D}}\! (\beta_G(m)-\beta_G^*) .
\end{align*}
\end{theorem}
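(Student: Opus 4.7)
The plan is to use the explicit stationary distribution from Theorem \ref{quasi system}, together with Stirling's formula and the scaling $G_i^{(c)}(k) = cG_i(k/c + d_i^{(c)}/c)$, to derive pointwise logarithmic asymptotics for the unnormalized density and for the partition function $B_{G^{(c)}}$, and then to upgrade these to the full LDP bounds via covering and exponential-tightness arguments. Writing $\bP(M^{(c)} = m) = B_{G^{(c)}}^{-1} f^{(c)}(m)$ with $f^{(c)}(m)$ the numerator in (\ref{ed}), the first task is to show that for any $m \in \bR_+^K$ and any bounded sequence $\sigma^{(c)} \in \bR^K$ with $cm + \sigma^{(c)} \in \bZ_+^K$,
\begin{equation*}
\frac{1}{c}\log f^{(c)}(cm + \sigma^{(c)}) \longrightarrow -\beta_G(m).
\end{equation*}
Stirling's formula applied to the multinomial coefficients, exactly as in the proof of Proposition \ref{LDP queue prop}(a), produces the queue contribution $-\sum_{j,i}m_{ji}\log(m_{ji}C_j/m_j)$, while the scaling hypothesis yields $c^{-1}G_i^{(c)}(c\bar m_i + \bar\sigma_i^{(c)}) \to G_i(\bar m_i)$, giving the congestion-window contribution. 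Boundary cases with $m_{ji}=0$ are absorbed by the convention $0\log 0 = 0$.

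Next I would show that $c^{-1}\log B_{G^{(c)}} \to -\beta_G^*$. The ``$\geq$'' direction is immediate from the pointwise estimate evaluated at a minimiser of $\beta_G$. The ``$\leq$'' direction is a multidimensional principle of the largest term, following the template of Proposition \ref{LDP cwnd prop}(a): partition $\bZ_+^K$ into a bulk $\{m : \|m\| \leq Rc\}$, on which the number of terms is polynomial in $c$ and uniform convergence of $c^{-1}\log f^{(c)}$ to $-\beta_G$ on compacts yields the bound, and a tail $\{m : \|m\|>Rc\}$, on which the super-linear negative growth of each $G_i$ dominates the multinomial factor and produces a geometric estimate vanishing as $R\to\infty$. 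To justify this I also verify that $\beta_G$ is convex, continuous, and coercive: convexity via the Legendre representation (\ref{beta LF}) of the $\beta_j$ together with the concavity of each $G_i$; continuity from $x\log x \in C(\bR_+)$; and coercivity from the fact that $G_i'$ attains every real value, so $-G_i(\bar m_i)$ overwhelms the at-worst linear-in-$\|m\|$ queue contribution as soon as some $\bar m_i = \sum_{j\in i} m_{ji}$ becomes large. In particular, $\beta_G - \beta_G^*$ is a good rate function.

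With the pointwise and normalisation asymptotics in hand, the LDP lower bound on an open set $D^{\circ}$ is routine: pick $m \in D^{\circ}$, choose bounded $\sigma^{(c)}$ with $(cm + \sigma^{(c)})/c \in D^{\circ}$ for all large $c$, apply the previous two steps, and optimise over $m$. For the upper bound on a closed set $\bar D$, I would use exponential tightness (an immediate consequence of coercivity and the $B_{G^{(c)}}$ asymptotic) to truncate to $\bar D \cap \{\|m\| \leq R\}$, cover this compact set by finitely many balls on which $\beta_G$ oscillates by at most $\varepsilon$, bound the probability of each ball via the pointwise estimate, and let $\varepsilon \downarrow 0$ and $R\uparrow\infty$.

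The chief technical obstacle is the uniform tail control required both in the normalisation estimate and in establishing exponential tightness. The state space $\bZ_+^K$ is unbounded and $B_{G^{(c)}}$ is an infinite sum, so off-the-shelf finite-state LDP machinery does not apply; one must quantitatively exploit how rapidly $e^{G_i^{(c)}(\bar m_i)}$ decays for large $\bar m_i$ relative to the scale $c$. This is exactly what Assumption \ref{assump 2} provides, and the argument mirrors the geometric estimate (\ref{finite sum}) from the proof of Proposition \ref{LDP cwnd prop}(a), complicated here by the multidimensional geometry and the coupling constraint $\bar m_i = \sum_{j\in i} m_{ji}$ that prevents the joint distribution from factorising.
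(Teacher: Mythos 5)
Your proposal is sound, but it follows a genuinely different route from the paper. You work directly with the conditioned, non-product distribution (\ref{ed}): pointwise Stirling asymptotics for the unnormalized weights, a multidimensional principle-of-the-largest-term for $B_{G^{(c)}}$, exponential tightness from coercivity of $\beta_G$, and a compact covering argument for the upper bound. The paper instead introduces an exponentially tilted \emph{product} measure $\tilde{\bP}^{(c)}$ on $\bZ_+^K\times\bZ_+^I$ (with $\sum_{i:j\in i}e^{\lambda_i}<C_j$ so that $\tilde{B}^{(c)}$ factorizes into geometric sums and the one-dimensional sums of Proposition \ref{LDP cwnd prop}), observes that $\bP^{(c)}$ is $\tilde{\bP}^{(c)}$ conditioned on the coupling event $\{\tilde{M}^{(c)}_i=\sum_{j\in i}M^{(c)}_{ji}\}$, gets the lower bound from pointwise asymptotics and the upper bound from the G\"artner--Ellis Theorem applied to the factorized moment generating function, and then evaluates both bounds on the closed set $\bar{E}$ to obtain $\frac{1}{c}\log B_{G^{(c)}}\to-\beta_G^*$ and transfer the LDP to $\bP^{(c)}$. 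What the paper's route buys is that all tail and uniformity issues are absorbed into exactly computable one-dimensional objects and the off-the-shelf G\"artner--Ellis upper bound, and the Legendre--Fenchel computation of $F^*$ simultaneously sets up the duality used in Theorem \ref{primaldual theorem}; what your route buys is a self-contained, more elementary argument with no change of measure or G\"artner--Ellis machinery, at the price of having to carry out the multidimensional tail estimate and covering step yourself. Those steps do go through essentially as you sketch: the multinomial coefficient is bounded uniformly by $\exp\{-m_j\sum_{i:j\in i}\frac{m_{ji}}{m_j}\log\frac{m_{ji}}{m_j}\}$, and concavity of $G_i$ together with Assumption \ref{assump 2} (the derivative of $G_i$ attains every value in $\bR$, hence $G_i$ eventually decays faster than any linear function) dominates the at-most-geometric growth of the queue factors, exactly mirroring (\ref{finite sum}); just be careful that "uniform convergence on compacts" should be stated as one-sided (upper) estimates near boundary points where some $m_{ji}=0$ or where $G_i$ may be $-\infty$, which is all the covering argument requires. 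With that caveat made explicit, your argument is complete and correct.
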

\begin{proof}
Assuming $\lambda\in\bR^I$ satisfies
\begin{equation*}
 \sum_{i: j\in i} e^{\lambda_i} < C_j, \qquad \forall j\in\mJ.
\end{equation*} 
We define a product form stationary distribution on $\bZ_+^K\times\bZ_+^I$ with,
\begin{align}\label{P Ptilde}
 \tilde{\bP}^{(c)} (M^{(c)}\!\!=m&,\tilde{M}^{(c)}\!\!=\tilde{m})=\frac{1}{\tilde{B}^{(c)}} \prod_{j\in\mJ}\! \left(\!\! \begin{array}{cc} m_j \\ m_{ji} \: : \: i\ni j\end{array}\!\! \right) \prod_{i:\: j\in i} \frac{e^{\lambda_im_{ji}}}{C_j^{m_{ji}}} \times \prod_{i\in\mI} e^{G_i^{(c)}(\bar{m}_i)-\lambda_i \bar{m}_i},
\end{align}
$m\in\bZ_+^K$ and $\tilde{m}\in\bZ_+^I$ where,
\begin{equation*}
 \tilde{B}^{(c)}= \prod_{j\in\mJ} \left( \frac{C_j}{C_j - \sum_{i : j\in i} e^{\lambda_i} } \right) \times \prod_{i\in\mI} \left( \sum_{k=0}^\infty e^{G_i^{(c)}(k) -\lambda_i k} \right).
\end{equation*}
Note by (\ref{finite sum}) in Proposition \ref{LDP cwnd prop}, $\tilde{B}^{(c)}$ is finite. Note that $\bP^{(c)}$ is expressible in terms of $\tilde{\bP}^{(c)}$ through the conditional probability,
\begin{equation*}
 \bP^{(c)}(M^{(c)}=m)=\tilde{P}^{(c)}( M^{(c)}=m\: |\: \tilde{M}^{(c)}_i=\sum_{j\in i} M^{(c)}_{ji},\; i\in\mI ),
\end{equation*}
for $m\in\bZ_+^K$. By proving large deviations results about $\tilde{\bP}^{(c)}$ we are able to prove a large deviations principle for $\bP^{(c)}$. First, Proposition \ref{LDP queue prop}, Proposition \ref{LDP cwnd prop}a) and the definition of $U_i$ (\ref{Ui}), we have that
\begin{equation*}
 \lim_{c\rightarrow\infty} \frac{1}{c} \log \tilde{B}^{(c)} = -\sum_{i\in\mI} U_i(e^{\lambda_i}).
\end{equation*}
Thus by Proposition \ref{LDP cwnd prop} and Proposition \ref{LDP queue prop}, for $m\in\bR_+^K$, $\tilde{m} \in\bR_+^I$ with bounded sequences $\sigma^{(c)}\in\bR_+^K$ and $\tilde{\sigma}^{(c)}\in\bR_+^I$ $c\in\bN$ such that $cm+\sigma^{(c)}\in\bZ_+^K$ and $c\tilde{m} + \tilde{\sigma}^{(c)} \in\bZ_+^I$ we have that
\begin{equation*}
 -\beta_{G,\lambda}(m,\tilde{m})+\sum_{i\in\mI} U_i(e^{\lambda_i}) =\lim_{c\rightarrow \infty} \frac{1}{c} \log \tilde{\bP}^{(c)}(M^{(c)}=cm+\sigma^{(c)},\: \tilde{M}^{(c)}= c\tilde{m} + \tilde{\sigma}^{(c)}).
\end{equation*}
Take $E\subset \bR_+^K\times \bR^I_+$, either open or more generally such that $\forall (m,\tilde{m})\in E$ there exists a sequence as described above with $cm+\sigma^{(c)}\in\bZ_+^K$ and $c\tilde{m} + \tilde{\sigma}^{(c)} \in\bZ_+^I$ such that $(m+\frac{\sigma^{(c)}}{c}, \tilde{m}+\frac{\tilde{\sigma}^{(c)}}{c})\in E$ eventually as $c\rightarrow\infty$ then 
\begin{align}
 &\;\;\;\;-\inf_{(m,\tilde{m})\in E} \beta_{G,\lambda}(m,\tilde{m})\notag\\ 
&= -\inf_{(m,\tilde{m})\in E} \lim_{c\rightarrow \infty} \frac{1}{c} \log \tilde{\bP}^{(c)} (M^{(c)}=cm+\sigma^{(c)},\: \tilde{M}^{(c)}=c\tilde{m}+\tilde{\sigma}^{(c)})\notag \\
&\leq \liminf_{c\rightarrow \infty} \frac{1}{c} \log \tilde{\bP}^{(c)} \Big( ( \frac{M}{c}^{(c)}, \frac{\tilde{M}}{c}^{(c)}) \in E \Big).\label{LDP low}
\end{align}
This gives us a large deviations lower bound for $\tilde{\bP}^{(c)}$. We prove the upperbound by using the G\"artner-Ellis Theorem \cite[Page 44]{DeZe98}. We study the moment generating function of $\tilde{\bP}^{(c)}$, for $\theta\in\bR^K$ and $\phi\in\bR^I$
\begin{align*}
&\bE e^{\theta\cdot M^{(c)}+ \phi\cdot \tilde{M}^{(c)}} \\
&= 
\begin{cases}
 \prod_{j\in\mJ} \left( \frac{C_j-\sum_{i: j\in i} e^{\lambda_i}}{C_j-\sum_{i: j\in i} e^{\lambda_i+ \theta_{ji}}}\right)\\ 
\qquad\times \prod_{i\in\mI} \left( \frac{\sum_{k=0}^\infty e^{G_i^{(c)}(k)-(\lambda_i-\phi_i)k }}{ \sum_{k=0}^{\infty} e^{G_i^{(c)}(k) - \lambda_ik}}\right)&\text{if } \sum_{i:j\in i} e^{\lambda_i+\theta_{ji}} < C_j,\quad j\in\mJ, \\
\infty & \text{otherwise.}
\end{cases}
\end{align*}
Thus combining Proposition \ref{LDP cwnd prop} part a) and Proposition \ref{LDP queue prop} part b),
\begin{align*}
 F(\theta,\phi)&=\lim_{c\rightarrow\infty} \frac{1}{c} \log \bE e^{\theta\cdot\frac{M}{c}^{(c)}+ \phi\cdot \frac{\tilde{M}}{c}^{(c)}}\\
&=\begin{cases}
   \sum_{i\in\mI} U_i(e^{\lambda_i})-U_i(e^{\lambda_i-\phi_i}) & \text{if } \sum_{i: j\in i} e^{\lambda_i+\theta_{ji}} < C_j,\;\; j\in\mJ\\
\infty & \text{otherwise.} 
  \end{cases}
\end{align*}
Thus $F$ has Legendre-Fenchel transform,
\begin{align*}
F^*(m,\bar{m})=&\max_{\substack{\theta\in\bR^K\\ \phi\in\bR^I}} \sum_{ji} m_{ji}\theta_{ji} + \sum_i \tilde{m}_i\phi_i + \sum_i\Big( U_i(e^{\lambda_i-\phi_i}) - U_i(e^{\lambda_i})\Big)\\
&\text{subject to}\:\: \sum_{i: j\in i} e^{\lambda_i+\theta_{ji}}<C_j, \;\; j\in\mJ\\
=&\max_{\theta\in\bR^K} \left\{ \sum_{ji} m_{ji}\theta_{ji}\: :\: \sum_i e^{\lambda_i+\theta_{ji}}<C_j,\:\: j\in\mJ\right\}\\
+& \sum_{i\in\mI} \max_{\phi'_i\in\bR} \left\{ U_i(e^{\lambda_i+\phi'_i})-\tilde{m}_i(\phi'_i+\lambda_i)\right\}\\
+&\sum_i \lambda_i\tilde{m}_i - \sum_i U_i(e^{\lambda_i})\\
=& \sum_{ji} m_{ji}\log \frac{m_{ji}C_j}{m_j e^{\lambda_i}} - \sum_i ( G_i(\tilde{m}_i)-\lambda_i\tilde{m}_i) - \sum_i U_i(e^{\lambda_i}).
\end{align*}
In the second equality we collected terms and substitute $\phi'_i=-\phi_i$ for $i\in\mI$.
In the third equality we apply (\ref{beta LF}) to the first maximization and the user problem (\ref{Gi}) to the second maximization. From this the G\"artner-Ellis Theorem \cite[Page 44]{DeZe98} proves that for all closed sets $E\subset \bR_+^K\times \bR_+^I$
\begin{equation}\label{LDP up}
 \limsup_{c\rightarrow\infty} \frac{1}{c}\log \tilde{\bP}^{(c)}\Big( (\frac{M}{c}^{(c)}, \frac{\tilde{M}}{c}^{(c)})\in E \Big)\leq -\!\!\inf_{(m,\tilde{m})\in E} \Big( \beta_{G,\lambda} (m,\tilde{m}) - \sum_i U_i(e^{\lambda_i})\Big).
\end{equation}
In particular we are interested in the closed set $\bar{E}=\{ (m,\tilde{m}): \sum_{j\in i} m_{ji}=m_i,\: i\in\mI\}$. Note if $m\in\bR_+^I$ and sequence $\sigma^{(c)}\in\bR_+^I$, $c\in\bN$, is such that $cm+\sigma^{(c)}\in\bZ_+^I$ and defining $\bar{\sigma}^{(c)}_i=\sum_{j\in i} \sigma^{(c)}_{ji}$ then $(cm+\sigma^{(c)},c\bar{m}+\bar{\sigma}^{(c)})\in \bar{E}$. So we may apply lower bound (\ref{LDP low}) to $\bar{E}$ and also apply upper bound (\ref{LDP up}) to this choice of $\bar{E}$. Hence we have that,
\begin{equation}
 \lim_{c\rightarrow\infty} \frac{1}{c} \log \tilde{\bP}^{(c)}(\sum_{j\in i} M_{ji}^{(c)} = \tilde{M}_i^{(c)},\: i\in\mI)=-\beta^*_G-\sum_i U_i(e^{\lambda_i}). \label{LDP scaling 1}
\end{equation}
Or put otherwise we have for the normalising constant $B_{G^{(c)}}$,
\begin{equation}
 \lim_{c\rightarrow\infty} \frac{1}{c} \log B_{G^{(c)}} = -\beta^*_G. \label{LDP scaling 2}
\end{equation}
From (\ref{P Ptilde}) combining (\ref{LDP scaling 1}) with large deviations upper bound (\ref{LDP up}), for all closed sets $D\in\bR_+^K$ letting $D'=\{ (m,\bar{m}): m\in D\}\subset \bar{E}$ we have
\begin{align*}
 &\limsup_{c\rightarrow \infty} \frac{1}{c}\log \bP^{(c)}\Big(\frac{M}{c}^{(c)}\in D\Big)\\
=& \limsup_{c\rightarrow\infty} \bigg(\frac{1}{c}\log \tilde{\bP}^{(c)} \Big((\frac{M}{c}^{(c)},\frac{\tilde{M}}{c}^{(c)})\in D'\Big) - \frac{1}{c}\log \tilde{\bP}( \sum_j M_{ji}^{(c)} = \tilde{M}_i^{(c)},\: i\in\mI)\bigg)\\
\leq &  -\inf_{m\in D} \beta_G(m)\;\;  +\;\;  \beta^*_G.
\end{align*}
This proves the large deviations upperbound for $\bP^{(c)}$. The lower bound follows similarly by combining (\ref{LDP scaling 1}) with lower bound (\ref{LDP low}).
\begin{flushright}
$\square$ 
\end{flushright}
\end{proof}

\subsection{Duality between state and flow}
An important phenomenon we find from our large deviation analysis is that the limiting state of our queueing system and the limiting flow through our queueing system are dual. We demonstrate here that dual form of the optimization problem (\ref{primal1}-\ref{primal3}) found in our large deviations analysis is exactly the system problem (\ref{system1}-\ref{system3}). 

\begin{theorem}\label{primaldual theorem}
\begin{align}
 \beta_G^*=&\min_{\substack{m\in\bR_+^K\\\bar{m}\in\bR_+^I}} \sum_{\substack{(j,i)\in\mK:\\m_j>0}} m_{ji}\log \frac{m_{ji}C_j}{m_j} - \sum_{i\in\mI} G_i(\bar{m}_i) \;\text{subject to } \sum_{j\in i} m_{ji}=\bar{m}_i,\;\: i\in\mI\label{primal}\\
=&\max_{\Lambda\in\bR_+^I}\quad \sum_{i\in\mI} U_i(\Lambda_i) \quad\text{ subject to }\quad\sum_{i:j\in i} \Lambda_i \leq C_j,\quad j\in\mJ\label{dual}.
\end{align}
Moreover, vector $(m^*,\bar{m}^*)\in\bR_+^K\times\bR_+^I$ optimizes (\ref{primal}) and $\Lambda^*\in\bR_+^I$ optimizes (\ref{dual}) iff
\begin{align}
&\sum_{j\in i} m^*_{ji} =\bar{m}_i^*,\quad \forall i\in\mI,\\ 
&\sum_{i: j\in i} \Lambda_i^* \leq C_j,\quad \forall j\in\mJ\\
& m^*_j\Lambda^*_i = m_{ji}^*C_j, \quad \forall (j,i)\in\mK,\label{equiv2}\\
&e^{G_i'(\bar{m}_i^*)}=\Lambda_i^*,\quad \forall i\in\mI. \label{equiv}
\end{align}
Here $G'_i$ is the derivative of the function $G_i$.
\end{theorem}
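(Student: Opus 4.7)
The plan is to obtain (\ref{dual}) as the Lagrangian dual of (\ref{primal}), exploiting the convexity and separable structure of the primal. The terms $\sum_{(j,i)\in\mK} m_{ji}\log(m_{ji}C_j/m_j)$ decompose as $\sum_j \beta_j(m^j)$ with $\Lambda_i\equiv 1$ in the notation of Proposition \ref{LDP queue prop}, and are convex by the Legendre-Fenchel representation (\ref{beta LF}); the terms $-G_i(\bar m_i)$ are convex by Assumption \ref{assump 2}. I would therefore attach multipliers $\lambda_i \in \bR$ to the equality constraints $\sum_{j\in i} m_{ji} = \bar m_i$ and form
\[
L(m,\bar m,\lambda) = \sum_{j\in\mJ} \beta_j^{(\lambda)}(m^j) - \sum_{i\in\mI}\bigl(G_i(\bar m_i)-\lambda_i\bar m_i\bigr),
\]
where $\beta_j^{(\lambda)}$ denotes $\beta_j$ of Proposition \ref{LDP queue prop} evaluated at $\Lambda_i=e^{\lambda_i}$.

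The Lagrangian is fully separable across queues and congestion windows, so its infimum over $m \in \bR_+^K$ and $\bar m \in \bR_+^I$ can be read off termwise. Proposition \ref{LDP queue prop}(b) gives $\inf_{m^j\geq 0}\beta_j^{(\lambda)}(m^j) = 0$ when $\sum_{i:j\in i} e^{\lambda_i}\leq C_j$ and $-\infty$ otherwise, while (\ref{Ui}) gives $\sup_{\bar m_i\geq 0}\{G_i(\bar m_i)-\lambda_i\bar m_i\} = G_i^*(\lambda_i) = -U_i(e^{\lambda_i})$. Substituting $\Lambda_i=e^{\lambda_i}$ identifies the Lagrangian dual with the system problem (\ref{dual}), the boundary $\Lambda_i=0$ being absorbed by continuity and the assumptions on $U_i$.

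Strong duality, i.e.\ equality of (\ref{primal}) with (\ref{dual}), then follows from standard convex duality: the primal is a convex program with affine equality constraints and strictly feasible non-negativity constraints (take any $m_{ji},\bar m_i>0$ satisfying the equalities), so Slater's condition is trivially met. Equivalently, the large deviations analysis in the proof of Theorem \ref{LDP} already delivers this equality as a by-product --- the G\"artner-Ellis computation carried out there produces $F^*(m,\bar m) = \beta_{G,\lambda}(m,\bar m) - \sum_i U_i(e^{\lambda_i})$, and (\ref{LDP scaling 2}) then identifies $\beta_G^*$ with the system-problem optimum $\sum_i U_i(\Lambda_i^*)$.

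The optimality characterization is then the standard KKT statement for convex problems under Slater's condition. Stationarity of $L$ in $m_{ji}$ (wherever $m_{ji}^*>0$) gives $\log(m_{ji}^* C_j / m_j^*) = \lambda_i^*$, which upon setting $\Lambda_i^* = e^{\lambda_i^*}$ is precisely (\ref{equiv2}) --- the case $m_{ji}^* = 0$ renders (\ref{equiv2}) automatic. Stationarity in $\bar m_i$ yields $G_i'(\bar m_i^*) = \lambda_i^*$, which on exponentiation is (\ref{equiv}). Primal and dual feasibility supply the remaining two conditions. The principal subtlety I foresee is the careful handling of boundary behaviour at $m_j = 0$ (where the convention $0\log 0 = 0$ must be invoked to keep the queue entropy terms finite and lower semicontinuous) and the verification that the dual supremum is attained at an interior point so that the derivative in (\ref{equiv}) is meaningful; both are technical rather than structural obstacles.
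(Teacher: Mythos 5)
Your proposal follows essentially the same route as the paper's proof: the Lagrangian you write down is exactly $\beta_{G,\lambda}$ of (\ref{lagrangian}), its termwise minimization via Proposition \ref{LDP queue prop}(b) and the transform (\ref{Ui}) is precisely the paper's computation (\ref{lagrange min}) identifying the dual with the system problem, and the characterization of optimizers is extracted from the same Lagrangian. The only real differences are presentational: you justify strong duality explicitly (Slater, or the large-deviations identity), where the paper simply asserts it, and you quote generic KKT necessity/sufficiency where the paper argues necessity by fixing $\lambda=\log\Lambda^*$ and invoking the relative entropy identity (\ref{entropy}), and sufficiency by direct substitution plus the Lagrangian Sufficiency Theorem.

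One concrete correction is needed in your boundary discussion: the claim that ``the case $m_{ji}^*=0$ renders (\ref{equiv2}) automatic'' is false. If $m_{ji}^*=0$ while $m_j^*>0$ (and $\Lambda_i^*>0$), then $m_j^*\Lambda_i^*>0=m_{ji}^*C_j$, so (\ref{equiv2}) is violated rather than automatic. The case that is genuinely automatic is $m_j^*=0$; when $m_j^*>0$ one must show that an optimal point has $m_{ji}^*>0$ for every $i\ni j$, either as the paper does via (\ref{m min}) and the relative entropy minimization (\ref{entropy}), or by noting that the entropy term has infinite downward slope at $m_{ji}=0$, so no such point can satisfy the first-order conditions for a minimum. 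With that repair, your stationarity computation applies to all relevant coordinates and the argument is sound and equivalent to the paper's.
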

\begin{proof}
Note the Lagrangian of optimisation problem (\ref{primal}) with Lagrange multipliers $\lambda=(\lambda_i:i\in\mI)$ is exactly $\beta_{G,\lambda}$ as defined by (\ref{lagrangian}).
Minimizing this Lagrangian we have
\begin{align}
 \min_{\substack{m\in\bR_+^K\\ \bar{m}\in\bR_+^I}}& \beta_{G,\lambda}(m,\bar{m})= \min_{m,\bar{m}} \sum_{ji} m_{ji}\log\frac{m_{ji}C_j}{m_j} - \sum_{i\in\mI} G_i(\bar{m}) + \sum_i \lambda_i\Big(\bar{m}_i-\sum_{j\in i} m_{ji}\Big)\notag\\
&=\min_{m\in\bR_+^K}\Big\{ \sum_{ji} m_{ji}\log \frac{m_{ji}C_j}{m_je^{\lambda_i}} \Big\} - \sum_i \max_{\bar{m}_i>0} \Big\{ G_i(\bar{m}_i) -  \lambda_i \bar{m}_i \Big\}\label{lagrange min}\\
&=
\begin{cases} 
 \sum_i U_i(e^{\lambda_i}) & \text{if } \sum_{i: j\in i} e^{\lambda_i} \leq C_j,\\
-\infty & \text{otherwise.}
\end{cases}\notag
\end{align}
In the final, equality we apply Proposition \ref{LDP queue prop} b) and definition (\ref{Ui}).
Thus the dual of optimization problem (\ref{primal}) is,
\begin{equation*}
 \max_{\lambda\in\bR^I}\quad \sum_{i\in\mI} U_i(e^{\lambda_i})\quad \text{ subject to }\quad\sum_{i:j\in i} e^{\lambda_i} \leq C_j,\quad j\in\mJ.\label{dual2}
\end{equation*}
By the strong duality of optimization problem (\ref{primal}) and (\ref{dual}) we have that expressions (\ref{primal}) and (\ref{dual}) are equal.

Now we demonstrate the only if part of the equivalence with (\ref{equiv}). Suppose $\Lambda^*$ optimises (\ref{dual}). We now consider how our Lagrangian behaves for $\lambda=(\log \Lambda^*_i:\: i\in\mI)$. From (\ref{lagrange min}) we see $(m^*,\bar{m}^*)$ must solve
\begin{align}
\min_{\bar{m}_i>0} \bigg\{ G_i(\bar{m}_i) -\bar{m}_i \log \Lambda^*_i \bigg\}&= -U_i(\lambda^*_i),\quad i \in\mI,\label{G min}\\
 \min_{\substack{m\in\bR:\\ m_j>0}} \bigg\{ m_j \sum_{i:j\in i} \frac{m_{ji}}{m_j} \log \frac{m_{ji}C_j}{m_j \Lambda^*_i}  \bigg\}&=0, \quad j\in\mJ.\label{m min}
\end{align}
From (\ref{G min}) we see that $e^{G'_i(\bar{m}^*_i)} = \Lambda_i^*$. We now consider (\ref{m min}) if $m^*_j=0$ then (\ref{m min}) is satisfied and so is (\ref{equiv2}). If $m^*_j>0$ then given relative entropy result (\ref{entropy}), (\ref{m min}) can only hold if $\sum_{i: j\in i} \Lambda_i = C_j$ and $\frac{m^*_{ji}}{m^*_j}=\frac{\Lambda_i}{C_j}$ for all $i\in\mI$ such that $j\in i$. Thus both (\ref{equiv2}) and (\ref{equiv}) hold.

Conversely if (\ref{equiv2}) and (\ref{equiv}) hold then substituting $m^*$ and $\bar{m}^*$ into the objective function of optimization problem (\ref{primal}) gives that
\begin{align*}
 \sum_{\substack{(j,i)\in\mK\\m^*_j>0}} m^*_{ji}\log \frac{m^*_{ji}C_j}{m^*_j} - \sum_{i\in\mI} G_i(\bar{m}^*_i)&=\sum_{i\in\mI} \bar{m}^*_i \log \Lambda_i^* - \sum_{i\in\mI} G_i(\bar{m}^*_i)= \sum_{i\in\mI} U_i(\Lambda_i^*).
\end{align*}
In the final equality, we use Legendre-Fenchel transform expression (\ref{Ui}). Thus $\bar{m}_i^*$ attains the value $\sum_i U_i(\Lambda_i^*)$ for $\Lambda^*$ feasible and thus by the Lagrangian Sufficiency Theorem our solutions are optimal for (\ref{primal}) and (\ref{dual}).
\begin{flushright}
$\square$ 
\end{flushright}
\end{proof}

\subsection{Most likely state and convergence of throughput}
We now study the most likely behaviour of our stationary sequence of queueing systems $\frac{M}{c}^{(c)}$, $c\in\bN$. Section \ref{LDP system} suggests our sequence of queueing systems implicitly solve the system problem (\ref{system1}-\ref{system3}). This section formalizes this assertion. We show the sequence of stationary queueing systems $\frac{M}{c}^{(c)}$, $c\in\bN$ concentrates on the solutions of the primal optimization problem (\ref{primal1}-\ref{primal3}). From this we show the stationary rate packets pass through the network converges to the rate allocation solving the system problem. We define the manifold
\begin{equation*}
 \mM= \{ m\in\bR_+^K: m_{ji}C_j=m_j\Lambda_i^*,\;(j,i)\in\mK,\; G_i'(\bar{m}_i)=\log \Lambda_i^*,\; i\in\mI\},
\end{equation*}
where $(\Lambda_i^*: i\in\mI)$ is the optimal solution to the system problem (\ref{system1}-\ref{system3}). From Theorem \ref{primaldual theorem} we know that $\mM$ is the set of solutions to the primal optimization problem (\ref{primal1}-\ref{primal3}). The stationary sequence of queues $\frac{M}{c}^{(c)}$, $c\in\bN$, considered in Section \ref{LDP system} converges in probability to the set of solutions $\mM$.
\begin{theorem}
\begin{equation*}
 \bP^{(c)}\Big( \inf_{m\in\mM} \Big|\Big| \frac{M}{c}^{(c)} - m \Big|\Big| \geq \epsilon \Big) \xrightarrow[c\rightarrow\infty]{} 0.
\end{equation*}
\end{theorem}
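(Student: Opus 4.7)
The plan is to apply the large deviations upper bound from Theorem~\ref{LDP} to the closed set
\begin{equation*}
D_\epsilon = \Big\{ m \in \bR_+^K : \inf_{m' \in \mM} \|m - m'\| \geq \epsilon \Big\}.
\end{equation*}
Theorem~\ref{LDP} then gives
\begin{equation*}
\limsup_{c\to\infty} \frac{1}{c}\log \bP^{(c)}\Big(\tfrac{M}{c}^{(c)} \in D_\epsilon\Big) \leq -\inf_{m\in D_\epsilon}\big(\beta_G(m)-\beta_G^*\big),
\end{equation*}
so if the right-hand side is strictly negative, the probability decays exponentially and in particular tends to zero, which is what the theorem requires.

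The task therefore reduces to showing $\delta := \inf_{m\in D_\epsilon}(\beta_G(m)-\beta_G^*) > 0$. By Theorem~\ref{primaldual theorem} together with the equivalence conditions (\ref{equiv2})--(\ref{equiv}), the zero set of $\beta_G - \beta_G^*$ is exactly $\mM$, and under Assumptions~\ref{assump 1} and~\ref{assump 2} the system problem admits an optimizer $\Lambda^*$, so $\mM$ is nonempty (the primal variables $m^*,\bar m^*$ are built from $\Lambda^*$ via $G_i'(\bar m_i^*)=\log\Lambda_i^*$ and $m_j^*\Lambda_i^* = m_{ji}^*C_j$). Consequently $\beta_G(m)-\beta_G^* > 0$ pointwise on $D_\epsilon$; I would upgrade this to a uniform bound using that $\beta_G - \beta_G^*$ is a good rate function. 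Fix some $L>0$ and let $K_L = \{m\in\bR_+^K : \beta_G(m)-\beta_G^* \leq L\}$, which is compact by goodness. On $D_\epsilon \setminus K_L$ we have $\beta_G - \beta_G^* > L$ by construction. On $D_\epsilon \cap K_L$, a closed subset of a compact set, the lower semicontinuous function $\beta_G - \beta_G^*$ attains its infimum at some $m_0\notin\mM$, so $\beta_G(m_0)-\beta_G^* > 0$. Setting $\delta = \min(L,\, \beta_G(m_0)-\beta_G^*) > 0$ gives the desired uniform positivity.

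The main obstacle is precisely this uniform positivity: a priori the infimum along sequences in $D_\epsilon$ could approach $\beta_G^*$ by escaping to infinity, and ruling this out is exactly what goodness of the rate function buys us. A secondary technical point is checking that $\mM$ is correctly characterised as the full zero set of $\beta_G-\beta_G^*$ rather than just containing it; this is handled by the ``iff'' in Theorem~\ref{primaldual theorem}, since any primal optimiser must satisfy (\ref{equiv2}) and (\ref{equiv}) with some $\Lambda^*$, which by exponential concavity (Assumption~\ref{assump 1}) is unique and therefore forces membership in $\mM$. With both pieces in hand, the exponential upper bound yields convergence in probability and completes the proof.
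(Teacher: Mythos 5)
Your proposal is correct and follows essentially the same route as the paper: apply the large deviations upper bound of Theorem~\ref{LDP} to the closed complement of the $\epsilon$-neighbourhood of $\mM$ and show the rate function's infimum there exceeds $\beta_G^*$ strictly (the paper's $\beta_{G,\epsilon}^*>\beta_G^*$), which yields exponential decay of the probability. In fact your argument via goodness of the rate function and the ``iff'' characterisation of the zero set in Theorem~\ref{primaldual theorem} supplies the uniform positivity that the paper only asserts briefly from closedness and compactness of $\mM$, so it is a slightly more detailed rendering of the same proof.
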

\begin{proof}
For $\epsilon>0$, let $\mM_{\epsilon}=\{ m\in\bR_+^K: \inf_{m'\in\mM}||m-m'||<\epsilon \}$. As $\mM$ is closed and compact
\begin{align*}
 \beta_{G,\epsilon}^*:=& \min_{\substack{m\in\bR_+^K\\m\in\bR_+^I}} \sum_{\substack{(j,i)\in\mK:\\ m_j>0}} m_{ji} \log \frac{m_{ji}C_j}{m_j} -\sum_i G_i(\bar{m}_i)\\
&\text{subject to}\;\; m\notin \mM_{\epsilon},\; \sum_{j\in i} m_{ji} =\bar{m}_i,\quad i\in\mI\\
>& \beta^*_{G},
\end{align*}
where we recall $\beta^*_G$ from (\ref{primal1}-\ref{primal3}). Thus by Theorem \ref{LDP} for all $\epsilon>0$
\begin{equation*}
 \limsup_{c\rightarrow\infty} \frac{1}{c} \log \bP^{(c)} \Big( \inf_{m\in\mM} \big|\big| \frac{M}{c}^{(c)}-m\big|\big| \geq \epsilon \Big) \leq -\beta_{G,\epsilon}^* + \beta_G^*. 
\end{equation*}
Thus $\forall \epsilon'\in (0, \beta_{G,\epsilon}^*-\beta_G^*)$ eventually as $c\rightarrow\infty$
\begin{equation*}
 \bP^{(c)}\Big( \inf_{m\in\mM} \big|\big|\frac{M}{c}^{(c)}-m\big|\big| \geq \epsilon \Big) \leq e^{-c(\beta_{G,\epsilon}^*-\beta_G^*) + c\epsilon'}\xrightarrow[c\rightarrow\infty]{} 0.
\end{equation*}
\begin{flushright}
$\square$ 
\end{flushright}
\end{proof}

The stationary throughput of route $i$ packets in our queueing system can be expressed as
\begin{equation*}
 \Lambda_i^{(c)} =\bE^{(c)} e^{G_i^{(c)}(\bar{M}^{(c)}_i+1)-G_i^{(c)}(\bar{M}_i)},\qquad i\in\mI.
\end{equation*}
That is the stationary rate packets are sent into the queueing network by the $i$-th congestion window when it is at congestion level $c$. We now show that this rate converges to the solution to the system problem $(\Lambda_i^*:i\in\mI)$. In this sense our sequence of queueing systems implicitly solve the system problem.
\begin{theorem}\label{final thrm}
\begin{equation*}
 \Lambda_i^{(c)}\xrightarrow[c\rightarrow\infty]{} \Lambda^*_i,\qquad\quad i\in\mI.
\end{equation*}
\end{theorem}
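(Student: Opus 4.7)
The plan is to rewrite $\Lambda_i^{(c)} = \bE^{(c)}[g_i^{(c)}(\bar{M}_i^{(c)})]$, argue that the integrand concentrates at $\Lambda_i^*$ in probability, and then pass to convergence of expectations by combining a Fatou-type lower bound with a capacity-based upper bound. The main obstacle will be closing this gap without direct uniform integrability of the possibly unbounded integrands $g_i^{(c)}(\bar{M}_i^{(c)})$.

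First I would analyse the integrand. Using $G_i^{(c)}(k)=cG_i(k/c+d_i^{(c)}/c)$ and the mean value theorem,
$$
g_i^{(c)}(\bar{m}) = \exp\bigl\{c[G_i(\bar{m}/c+1/c+d_i^{(c)}/c)-G_i(\bar{m}/c+d_i^{(c)}/c)]\bigr\} = e^{G_i'(\xi_c(\bar{m}))},
$$
for some $\xi_c(\bar{m})$ between $\bar{m}/c+d_i^{(c)}/c$ and $\bar{m}/c+(1+d_i^{(c)})/c$. The preceding theorem gives $M^{(c)}/c$ concentrating on $\mM$ in probability; since $m\mapsto\sum_{j\in i}m_{ji}$ is continuous and equal to the same value $\bar{m}_i^*$ on all of $\mM$ (because $G_i'$ is strictly decreasing by strict concavity, so the equation $G_i'(\bar{m}_i^*)=\log\Lambda_i^*$ has a unique solution), this yields $\bar{M}_i^{(c)}/c\to\bar{m}_i^*$ in probability. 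Assumption 2 together with $U_i'(\Lambda)\to\infty$ as $\Lambda\to 0$ ensures $\bar{m}_i^*\in(0,\infty)$, and so continuity of $G_i'$ at $\bar{m}_i^*$ gives $g_i^{(c)}(\bar{M}_i^{(c)})\to e^{G_i'(\bar{m}_i^*)}=\Lambda_i^*$ in probability.

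Next, since $g_i^{(c)}\geq 0$, extracting a.s.-convergent subsubsequences and applying Fatou yields
$$
\Lambda_i^* \;\leq\; \liminf_{c\to\infty}\bE^{(c)}[g_i^{(c)}(\bar{M}_i^{(c)})] \;=\; \liminf_{c\to\infty}\Lambda_i^{(c)}.
$$
For the matching upper bound I would use stationarity of the queueing system: the rate at which route $i$ packets depart queue $j\in i$ equals the injection rate $\Lambda_i^{(c)}$, so summing over routes through $j$ and comparing with the queue's service capacity gives $\sum_{i:j\in i}\Lambda_i^{(c)}\leq C_j$ for every $j\in\mJ$. In particular the sequence $\{\Lambda^{(c)}\}_{c\in\bN}$ is bounded in $\bR_+^I$ and thus has convergent subsequences.

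Finally, any subsequential limit $L$ of $\Lambda^{(c)}$ satisfies $L_i\geq\Lambda_i^*$ (by the Fatou bound) and $\sum_{i:j\in i}L_i\leq C_j$ (by the capacity bound), so $L$ is feasible for the system problem. Strict monotonicity of each $U_i$ would force $\sum_i U_i(L_i)>\sum_i U_i(\Lambda_i^*)$ whenever $L\neq\Lambda^*$, contradicting the optimality of $\Lambda^*$. Hence every subsequential limit equals $\Lambda^*$, and because $\{\Lambda^{(c)}\}$ is bounded this gives $\Lambda^{(c)}\to\Lambda^*$ as desired. The delicate point is precisely avoiding uniform integrability: it is the feasibility-plus-optimality argument rather than any tail control on $g_i^{(c)}$ that supplies the limsup bound.
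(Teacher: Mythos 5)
Your proof is correct, but it reaches the upper bound by a genuinely different route from the paper. The paper's proof confronts the (in $c$) unbounded integrand $g_i^{(c)}(\bar{M}_i^{(c)})$ head-on: it introduces the tilted stationary law $\breve{\bP}^{(c)}$ obtained by replacing $G_i^{(c)}(\cdot)$ with $G_i^{(c)}(\cdot+1)$, rewrites $\bE^{(c)}\big[g_i^{(c)}(\bar{M}_i^{(c)})\bI[M^{(c)}=m]\big]$ as $\tfrac{B_{\breve{G}^{(c)}}}{B_{G^{(c)}}}\breve{\bP}^{(c)}(M^{(c)}=m)$, notes that the shift only alters the bounded sequence $d_i^{(c)}$ so both laws share the same large deviations behaviour, and then kills the contribution of the event $\{\inf_{m\in\mM}\|\tfrac{M}{c}^{(c)}-m\|\geq\epsilon\}$ by the exponential decay of Theorem \ref{LDP}, giving the explicit estimate $|\Lambda_i^{(c)}-\Lambda_i^*|\leq\epsilon'+o(1)$. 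You instead never control that tail: you obtain $\liminf_c\Lambda_i^{(c)}\geq\Lambda_i^*$ from concentration plus nonnegativity (here the ``a.s.\ subsequence plus Fatou'' phrasing should be replaced by the direct bound $\Lambda_i^{(c)}\geq(\Lambda_i^*-\epsilon)\,\bP^{(c)}\big(g_i^{(c)}(\bar{M}_i^{(c)})\geq\Lambda_i^*-\epsilon\big)$, since the measures change with $c$ and live on different spaces), and you obtain the matching limsup bound from exact stationary flow conservation, $\sum_{i:j\in i}\Lambda_i^{(c)}\leq C_j$, combined with feasibility of subsequential limits, optimality of $\Lambda^*$ and strict monotonicity of $U_i$. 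This is legitimate and arguably more elementary, but it leans on two facts the paper never states and which you should prove: (i) in stationarity the class-$i$ throughput through every queue $j\in i$ equals the injection rate $\Lambda_i^{(c)}$ (rate balance for the number of class-$i$ packets at each queue, applied inductively along the route; the required integrability holds because concavity of $G_i^{(c)}$ gives $g_i^{(c)}(\bar{m})\leq g_i^{(c)}(0)<\infty$, so all expected transition rates are finite and the chain is non-explosive), after which summing over classes at queue $j$ bounds the total departure rate by $C_j$; and (ii) $U_i$ is strictly increasing, which does follow from (\ref{Ui}) and Assumption \ref{assump 2} because the maximizer $\bar{m}^*_{\lambda}$ in the transform is strictly positive. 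What the paper's change-of-measure argument buys is a self-contained, quantitative bound built entirely from the LDP machinery already developed; what your argument buys is that it sidesteps any tail control on $g_i^{(c)}$, at the cost of an auxiliary (short, standard) queueing lemma and an appeal to the structure of the optimization problem.
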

\begin{proof}
 We first describe a modification of measure $\bP^{(c)}$ that will be useful to us. For fixed $i\in\mI$ we let $\breve{\bP}^{(c)}$ be the stationary distribution of a queueing system defined by the same rates as $\bP^{(c)}$ except that the $i$-th congestion window is defined by $\breve{G}^{(c)}_i(\bar{m}_i)=G_i^{(c)}(\bar{m}_i+1)$. Observe for all $m\in\bZ_+^K$
\begin{align}
 &\bE^{(c)}\Big[ e^{G_i^{(c)}(\bar{M}^{(c)}_i+1)-G_i^{(c)}(\bar{M}^{(c)}_i)} \bI[M^{(c)}=m]\Big]\\
=&\frac{1}{B_G} \prod_{j\in\mI} \left( \begin{array}{cc} m_j \\ m_{jr}\: :\: r\ni j \end{array} \right) \frac{1}{C_j^{m_j}} \times \prod_{r\in\mI} e^{\breve{G}_r(\bar{m}_r)}=\frac{B_{\breve{G}^{(c)}}}{B_{G^{(c)}}}\breve{\bP}^{(c)}(M^{(c)}=m). \label{subst}
\end{align}
\noindent Also by definition $G_i^{(c)}(\bar{m}_i)=G_i\big(\frac{\bar{m}_i}{c}+\frac{d_i^{(c)}}{c}\big)$, thus $\breve{G}_i^{(c)}$ corresponds to taking $\breve{d}_i^{(c)}=d_i^{(c)}+1$. The precise values of the bounded sequence $\{d_i^{(c)}\}_{c\in\bN}$ do not determine any of the large deviations behaviour of $\bP^{(c)}$, thus both $\bP^{(c)}$ and $\breve{\bP}^{(c)}$ exhibit exactly the same large deviations behaviour. Hence given (\ref{LDP scaling 2}) and also Theorem \ref{LDP} we have that
\begin{equation}
 \lim_{c\rightarrow\infty} \frac{1}{c} \log \frac{B_{\breve{G}^{(c)}}}{B_{G^{(c)}}} = 0,\qquad \lim_{c\rightarrow\infty}\breve{\bP}^{(c)}\Big( \inf_{m\in\mM} || \frac{M}{c}^{(c)} - m || \geq \epsilon \Big)=0. \label{2lims}
\end{equation}

Now let $(m^*,\bar{m}^*)$ be an optimal solution to the primal optimization problem (\ref{primal1}-\ref{primal3}). By assumption  $G_i$ is differentiable with a continuous derivative at $\bar{m}^*_i$. Thus by the Mean Value Theorem $\forall \epsilon'>0$ $\exists \epsilon>0$ and $c'\in\bN$ such that $\forall\: c>c'$ and $\forall \bar{m}_i>0$ with $|\bar{m}_i - \bar{m}^*_i|\leq J\epsilon$, we have that
\begin{align*}
&|e^{G_i^{(c)}(\bar{m}_i+1)-G_i^{(c)}(\bar{m}_i)} - e^{G_i'(\bar{m}^*_i)}|\\
=&\Big|\exp\Big\{\frac{G_i(\bar{m}_i+\frac{1}{c}+\frac{d^{(c)}_i}{c})-G_i(\bar{m}_i+\frac{d^{(c)}_i}{c})}{\frac{1}{c}}\Big\}-\exp\{G_i'(\bar{m}^*_i)\}\Big|\leq \epsilon'.
\end{align*}
We can now show that eventually as $c\rightarrow\infty$,
\begin{align*}
&|\Lambda^{(c)}_i-\Lambda^*_i|\\
 \leq &\bE \Big| e^{G_i^{(c)}(\bar{M}_i^{(c)}+1)-G_i^{(c)}(\bar{M}_i^{(c)})}-e^{G_i'(\bar{m}^*_i)}\Big|\\
\leq &\;\epsilon' \bP^{(c)}\Big( \big|\frac{\bar{M}_i^{(c)}}{c} -\bar{m}^*_i \big|< J\epsilon \Big) + e^{G'_i(\bar{m}^*_i)}\bP^{(c)}\Big( \big|\frac{\bar{M}_i^{(c)}}{c} -\bar{m}^*_i \big|\geq J\epsilon \Big)\\
& +\;\bE^{(c)} e^{G_i^{(c)}(\bar{M}^{(c)}_i+1)-G_i^{(c)}(\bar{M}^{(c)}_i)} \bI\Big[ \big|\frac{\bar{M}_i^{(c)}}{c} -\bar{m}^*_i \big|\geq J\epsilon \Big]\\
\leq & \epsilon' +e^{G'_i(\bar{m}^*_i)}\bP^{(c)}\Big( \inf_{m\in\mM} || \frac{M}{c}^{(c)} - m || \geq \epsilon\Big)\\
& + \frac{B_{\breve{G}^{(c)}}}{B_{G^{(c)}}}\breve{\bP}^{(c)}\Big( \inf_{m\in\mM} || \frac{M}{c}^{(c)} - m || \geq \epsilon\Big)\\
\leq &\; \epsilon' + e^{G'_i(\bar{m}^*_i)} e^{-c(\beta^*_{G,\epsilon}-\beta^*_G) + c\epsilon''} + e^{c\epsilon''}e^{-c(\beta^*_{G,\epsilon}-\beta^*_G) + c\epsilon''}\xrightarrow[c\rightarrow\infty]{} \epsilon'.
\end{align*}
In the second inequality we make the substitution (\ref{subst}),  in the third inequality we apply (\ref{2lims}) and we take $2\epsilon''<\beta^*_{G,\epsilon}-\beta_{G}^*$. As $\epsilon'$ is arbitrary the result holds.

\begin{flushright}
$\square$ 
\end{flushright}
\end{proof}

\section{Conclusion} 
Previous work has considered the solution of the system problem by analysing differential equations. In this paper we have shown that this same notion of utility optimization can be solved by considering queueing networks with end-to-end control. This leads us to consider different interpretations of Kelly's decomposition results where pricing is determined by delay. This work emphasises the duality between the flow through a network and its state and also emphasises a wide variety of fairness that are provably achievable by end-to-end control.

\section*{Acknowledgement}
I would like to thank my PhD supervisor Frank Kelly for his continued support and insight, which has encouraged me to write this paper.

\bibliographystyle{acmtrans-ims}
\bibliography{../../references/references} 
\end{document}